\newtheorem{theorem}{Theorem}[section]
\newtheorem{corollary}[theorem]{Corollary}
\newtheorem{lemma}[theorem]{Lemma}
\newtheorem{proposition}[theorem]{Proposition}
\theoremstyle{remark}
\newtheorem{claim}{Claim}
\newtheorem{question}[theorem]{Question}
\def\identity{\ensuremath{{\mathop{\operator@font id}\nolimits}}}
\begin{document}

\title{A universal $P$-group of weight $\aleph$}
\author{Jan van Mill}

\address{KdV Institute for Mathematics,
University of Amsterdam,
Science Park 105-107,
P.O. Box 94248,
1090 GE Amsterdam, The Netherlands}
\email{j.vanMill@uva.nl}

\date{\today}

\keywords{Universal group, $P$-space, $\beta\omega\setminus\omega$, Continuum Hypothesis, Parovi\v{c}enko's Theorem}

\subjclass{54D35, 54H11, 22A05}

\begin{abstract}
We show that under the Continuum Hypothesis, the topological group of all homeomorphisms of the \v{C}ech-Stone remainder of $\omega$ with the $G_\delta$-topology, is a universal object for all $P$-groups of weight at most ${\mathfrak c}$.
\end{abstract}

\maketitle

\section{Introduction}\label{introduction}
\emph{All topological spaces under discussion are Tychonoff}

Uspenskiy~\cite{Uspenskii86} proved in 1986 that there is a universal topological group $G$ with a countable base. Universal in the sense that every topological group with a countable base is (topologically isomorphic to) a subgroup of $G$. It is the homeomorphism group $\mathcal{H}(Q)$ of the Hilbert cube $Q=\mathbb{I}^\omega$, endowed with the compact-open topology. (Another such group is the group of isometries of the Urysohn universal space, with the topology of pointwise convergence, \cite{Uspenskii90}.) The obvious question from \cite{Uspenskii90} -- whether there are universal topological groups of uncountable weight -- is still open. Natural candidates are the homeomorphism groups of the Tychonoff cubes $\mathbb{I}^\tau$, where $\tau$ is uncountable. Uspenskiy's proof in \cite{Uspenskii86} is based on Keller's Theorem~\cite{keller} from 1931 that all compact convex and infinite-dimensional subsets of Banach spaces, are homeomorphic to $Q$. There are good topological characterizations of the Tychonoff cubes $\mathbb{I}^\tau$ for uncountable $\tau$ by \u S\u cepin~\cite{scepin79} (see also \cite[7.2.9]{chico2}). However, a necessary condition in these characterizations is that the spaces under consideration are Absolute Retracts, which is problematic for arbitrary compact convex subsets of $\mathbb{I}^\tau$. (There are even compact convex subsets of $\mathbb{I}^{\omega_1}$ that do not satisfy the countable chain condition.) This blocks the attempt to prove that the spaces $\mathcal{H}(\mathbb{I}^\tau)$ are universal for all uncountable $\tau$ by a generalization of the arguments in Uspenskiy~\cite{Uspenskii86}.

Shkarin~\cite{shkarin99} proved that there are universal Abelian topological groups with a countable base. And if $\tau$ is an infinite cardinal such that $2^\lambda \le \tau$ for every cardinal
$\lambda < \tau$, then there is a group that is universal in the class of all Abelian topological groups of weight at most~$\tau$.

As usual, we denote the \v{C}ech-Stone compactification of the discrete space $\omega$ by $\beta\omega$, and $\omega^* = \beta\omega\setminus\omega$, its \v{C}ech-Stone remainder. A compact, zero-dimensional $F$-space of weight ${\mathfrak c}$ in which each non-empty $G_\delta$-subset
has infinite interior will be called a \emph{Parovi\v{c}enko space}. Here an $F$-space is a space in which cozero-subsets are $C^*$-embedded. For normal spaces, this simply boils down to the statement that disjoint open $F_\sigma$-subsets have disjoint closures. Under the Continuum Hypothesis ({\ensuremath{\mathsf{CH}}}), all Parovi\v{c}enko spaces are homeomorphic to $\omega^*$, as was shown by Parovi\v{c}enko~\cite{paro:universal}. Subsequently, van Douwen and van Mill~\cite{vm:14} obtained the converse: Parovi\v{c}enko's characterization of $\omega^*$ implies {\ensuremath{\mathsf{CH}}}.
Parovi\v{c}enko also showed that under {\ensuremath{\mathsf{CH}}}, every compact space of weight at most ${\mathfrak c}$ is a continuous image of $\omega^*$, hence $\omega^*$ is \emph{universal} in the sense of ‘mapping onto’. Dow and Hart~\cite{DowHart00} proved that under {\ensuremath{\mathsf{CH}}}, the
\v{C}ech-Stone remainder $[0,\infty)^*$ of the half line $[0,\infty)$ is universal among the continua of weight ${\mathfrak c}$, again in the sense of ‘mapping onto’. (It is an open problem whether in some model there is a topological characterization of $[0,\infty)^*$ in the style of Parovi\v{c}enko.)

We are interested in $\mathcal{H}(\omega^*)$, the homeomorphism group of $\omega^*$ with the compact-open topology. Like $\omega^*$ itself, it is a mysterious object. Rudin~\cite{Rudin56} proved that under {\ensuremath{\mathsf{CH}}}, its cardinality is $2^{\mathfrak c}$. And, under the same condition, it is simple by Fuchino~\cite{fuchinothesis}. It even remains simple
if $\aleph_2$ Cohen reals are added to a model of {\ensuremath{\mathsf{ZFC}}\ + {\ensuremath{\mathsf{CH}}}, \cite{fuchino92}. But Shelah~\cite{shelah:properforcing} proved that it is consistent that all homeomorphisms of $\omega^*$ are `trivial' in the sense that they are induced by bijections between cofinite subsets of $\omega$; hence, in this model, $\mathcal{H}(\omega^*)$ has cardinality ${\mathfrak c}$ in contrast to Rudin's result, and is by van Douwen~\cite{simple} not simple, in contrast to Fuchino's result.

The goal of this note is to prove that $\mathcal{H}(\omega^*)$ exhibits similar behavior within the class of suitable topological groups as $\omega^*$ does within the class of suitable topological spaces.

Following Arhangel'skii and Tkachenko~\cite{ArhangTkachenko08},  we will call a topological group whose underlying topological space is a $P$-space -- that is a space in which every $G_\delta$-subset is open -- a \emph{$P$-group}. If $G$ is a topological group, then $G_{(\delta)}$ denotes its \emph{$G_\delta$-modification}. The underlying set of $G_{(\delta)}$ is $G$, and the $G_\delta$-subsets of $G$ form a basis (for the open subsets) of $G_{(\delta)}$. It is easy to see that $G_{(\delta)}$ is a topological group, \cite[3.6.H]{ArhangTkachenko08}, and a $P$-space, hence is a $P$-group.

\begin{theorem}\label{eerstestelling}
 Let $G$ be a topological group of weight ${\mathfrak c}$. Then there is a Parovi\v{c}enko space~$X$ such that $G_{(\delta)}$ is (topologically isomorphic to) a subgroup of $\mathcal{H}(X)$.
\end{theorem}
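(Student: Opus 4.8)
The plan is to realise $G_{(\delta)}$ as a group of automorphisms of a suitable Boolean algebra whose Stone space is a Parovi\v{c}enko space, and then to transport this to $\mathcal H(X)$ via Stone duality. Throughout I use that $G_{(\delta)}$, being a Tychonoff $P$-group, is zero-dimensional (in a $P$-space every cozero set is clopen), and that it is non-archimedean: by the standard argument producing a subgroup inside a $G_\delta$-neighbourhood of the identity (choose symmetric open $U_n$ with $U_{n+1}U_{n+1}\subseteq U_n$ and intersect), every neighbourhood of $e$ in $G_{(\delta)}$ contains an open subgroup. Fix a base $\{H_\alpha:\alpha<\kappa\}$ of open subgroups at $e$ with $\kappa\le\mathfrak c$; this is possible since $w(G_{(\delta)})\le\mathfrak c^{\aleph_0}=\mathfrak c$, and $\bigcap_\alpha H_\alpha=\{e\}$ by Hausdorffness.

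First I would build a small, $G$-invariant algebra of clopen sets. Let $A$ be the Boolean subalgebra of the clopen algebra of $G_{(\delta)}$ generated by the family of all left cosets $\{gH_\alpha:g\in G,\ \alpha<\kappa\}$. For each fixed $\alpha$ the cosets of $H_\alpha$ form a disjoint family of open sets, so there are at most $c(G_{(\delta)})\le w(G_{(\delta)})\le\mathfrak c$ of them; hence $A$ is generated by at most $\mathfrak c$ elements and $|A|\le\mathfrak c$. By construction $A$ is invariant under the left-translation action of $G$, and this action is faithful: if $g$ fixes every generator then in particular $gH_\alpha=H_\alpha$, so $g\in\bigcap_\alpha H_\alpha=\{e\}$. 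Crucially, every element of $A$ has open stabiliser in $G_{(\delta)}$: the stabiliser of a coset $gH_\alpha$ is the conjugate $gH_\alpha g^{-1}$, which is open, and the stabiliser of a finite Boolean combination contains the finite intersection of the stabilisers of its pieces, hence is an open subgroup. Thus $G_{(\delta)}$ acts continuously and faithfully on the discrete set $A$.

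Next I would pass to a reduced power to force the Parovi\v{c}enko conditions. Put $B=A^{\omega}/\mathrm{fin}$, the quotient of the product algebra $A^{\omega}$ by the ideal of finitely supported sequences, and let $X=\mathrm{St}(B)$. Writing $S=\mathrm{St}(A)$ one identifies $X$ with the \v{C}ech--Stone remainder $(S\times\omega)^{*}$: indeed $A^{\omega}$ is the clopen algebra of the locally compact, $\sigma$-compact space $S\times\omega$, and $\mathrm{fin}$ is exactly its ideal of compact-open sets. Consequently $X$ is a compact zero-dimensional $F$-space, since the \v{C}ech--Stone remainder of a (non-compact) locally compact $\sigma$-compact space is an $F$-space. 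Moreover $|B|=\mathfrak c$ (one has $2^{\omega}/\mathrm{fin}\hookrightarrow B$ and $|B|\le\mathfrak c^{\aleph_0}=\mathfrak c$), so $w(X)=\mathfrak c$; $B$ is atomless, since any element of infinite support splits into two such; and every decreasing sequence of nonzero elements of $B$ has a nonzero lower bound, obtained (after passing to genuinely decreasing representatives) by choosing one point $n_k$ in the support of $b_k=[(a^{(k)}_n)_n]$ with $n_1<n_2<\cdots$, setting $c_{n_k}=a^{(k)}_{n_k}$ and $c_n=0$ otherwise. The last property says that every nonempty $G_\delta$ of $X$ has nonempty, hence (as $X$ has no isolated points) infinite, interior. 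Therefore $X$ is a Parovi\v{c}enko space.

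Finally I would let $G$ act on $B$ by the diagonal automorphisms $g\cdot[(a_n)_n]=[(ga_n)_n]$ and check that the induced homomorphism $\Phi\colon G_{(\delta)}\to\mathcal H(X)$ (via Stone duality, arranged to be a left action) is a topological isomorphism onto its image. Faithfulness passes through the constant sequences $a\mapsto[(a,a,\dots)]$. For continuity it suffices that every element of $B$ has open stabiliser, and this is the one point where the $P$-space hypothesis is indispensable: the stabiliser of $[(a_n)_n]$ equals $\bigcup_{F\ \mathrm{finite}}\bigcap_{n\notin F}\mathrm{Stab}_A(a_n)$, and each inner term is a countable intersection of open subgroups, which is again open precisely because $G_{(\delta)}$ is a $P$-group. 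For openness onto the image one uses the constant copies of the $[H_\alpha]$: the stabiliser in $B$ of the constant element determined by $[H_\alpha]$ is again $H_\alpha$, so $\Phi^{-1}$ of the basic neighbourhood of the identity fixing the corresponding clopen set is exactly $H_\alpha$; as $\{H_\alpha\}$ is a base at $e$, $\Phi$ carries it to a neighbourhood base of $\mathrm{id}$ in $\Phi(G)$. I expect the main obstacle to be exactly this interplay in the reduced power: the countably many coordinates are what make $B$ an $F$-space with the interpolation needed for a Parovi\v{c}enko space, yet they would destroy continuity of the action for a general group. It is only after replacing $G$ by its $G_\delta$-modification that countable intersections of open stabilisers remain open, so that the same countable apparatus simultaneously yields the $F$-space structure and preserves continuity.
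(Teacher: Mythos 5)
Your proposal is correct, and at its core it is the paper's construction rewritten in Stone--dual language: your reduced power $B=A^{\omega}/\mathrm{fin}$ is precisely the clopen algebra of the paper's space $Z^{*}=(\omega\times X)^{*}$ (the paper's $X$ is your $S=\mathrm{St}(A)$), your diagonal action $g\cdot[(a_n)_n]=[(ga_n)_n]$ is the Stone dual of the paper's map $i(f)=f^{\beta}$, and both arguments spend the $P$-hypothesis at the identical spot, namely to see that ``fixing all but finitely many coordinates'' is an open condition: your computation of the stabiliser as $\bigcup_{F}\bigcap_{n\notin F}\mathrm{Stab}_A(a_n)$ is the content of Lemmas \ref{zaterdageen} and \ref{tweedelemma}, and your constant elements $[(H_\alpha)_n]$ play the role of the sets $(\omega\times U)^{*}$ in the paper's openness lemma. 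Where you genuinely diverge is in what you prove versus what the paper cites. The paper obtains the compact zero-dimensional space on which $G_{(\delta)}$ acts from the Megrelishvili--Shlossberg theorem (Theorem \ref{megresh}, which moreover gives the sharper equality $w(X)=w(G_{(\delta)})$), and quotes \cite[1.2.5]{vanmill:betaomega} for the fact that $Z^{*}$ is a Parovi\v{c}enko space; you inline both ingredients, building the invariant coset algebra $A$ from a neighbourhood base of open subgroups (in effect a proof of the Megrelishvili--Shlossberg embedding in the special case of $P$-groups, which is all that is needed) and verifying the Parovi\v{c}enko properties of $\mathrm{St}(A^{\omega}/\mathrm{fin})$ by hand ($F$-space from local compactness and $\sigma$-compactness, atomlessness, nonzero lower bounds for decreasing sequences). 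What your route buys is self-containedness and a transparent localisation of exactly where the $P$-property is indispensable; what the paper's route buys is brevity, the stronger weight conclusion from Theorem \ref{megresh}, and intermediate lemmas stated for all of $\mathcal{H}(X)_{(\delta)}$ rather than only for the subgroup being embedded. All the steps you sketch check out (the passage to genuinely decreasing representatives, the tail-support argument for the lower bound, the identification of $\mathrm{Stab}_G([(H_\alpha)_n])$ with $H_\alpha$), so this is a complete and valid alternative writing of the same proof.
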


This is in line with van Douwen's unpublished result that every $P$-space of weight at most ${\mathfrak c}$ embeds in $\omega^*$; see \cite{vm:70} for details and references.

\begin{theorem}[{\ensuremath{\mathsf{CH}}}]\label{eerstecor}
$\mathcal{H}(\omega^*)_{(\delta)}$ is universal for the class of all $P$-groups of weight at most~${\mathfrak c}$.
\end{theorem}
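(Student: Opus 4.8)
The plan is to deduce Theorem~\ref{eerstecor} from Theorem~\ref{eerstestelling} by combining the embedding furnished there with Parovi\v{c}enko's characterization of $\omega^*$ under {\ensuremath{\mathsf{CH}}}. Let me set up the framework. Suppose $G$ is an arbitrary $P$-group of weight at most~${\mathfrak c}$. The first task is to realize $G$ as a $G_\delta$-modification: since $G$ is already a $P$-space, every $G_\delta$-subset of $G$ is open, so the $G_\delta$-sets form a basis and $G$ coincides with its own $G_\delta$-modification, i.e.\ $G = G_{(\delta)}$. Thus it suffices to embed groups of the form $H_{(\delta)}$ into $\mathcal{H}(\omega^*)_{(\delta)}$, and every $P$-group of weight $\le{\mathfrak c}$ arises this way (take $H = G$).

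Next I would apply Theorem~\ref{eerstestelling} to $H = G$ itself (viewed as an ordinary topological group of weight~${\mathfrak c}$, or of smaller weight, which poses no difficulty). This yields a Parovi\v{c}enko space~$X$ together with a topological isomorphism of $G_{(\delta)} = G$ onto a subgroup of $\mathcal{H}(X)$. Here is where {\ensuremath{\mathsf{CH}}}\ enters decisively: by Parovi\v{c}enko's theorem, under {\ensuremath{\mathsf{CH}}}\ \emph{all} Parovi\v{c}enko spaces are homeomorphic to $\omega^*$. Fix a homeomorphism $h\colon X \to \omega^*$. Conjugation by~$h$, namely $\varphi \mapsto h\circ\varphi\circ h^{-1}$, is a topological isomorphism of $\mathcal{H}(X)$ onto $\mathcal{H}(\omega^*)$ when both carry the compact-open topology, since conjugation by a fixed homeomorphism is always such an isomorphism. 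Composing, I obtain a topological isomorphism of $G$ onto a subgroup of $\mathcal{H}(\omega^*)$.

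The remaining point is to pass to the $G_\delta$-modifications and check that the embedding survives. I would invoke the functoriality of the $G_\delta$-modification: if $\iota\colon G \hookrightarrow \mathcal{H}(\omega^*)$ is a topological group embedding, then $\iota$ is also a topological group embedding $G_{(\delta)} \hookrightarrow \mathcal{H}(\omega^*)_{(\delta)}$, because a continuous homomorphism pulls back $G_\delta$-sets to $G_\delta$-sets and an embedding does so faithfully. Concretely, the topology on $G_{(\delta)}$ is generated by the $G_\delta$-subsets of $G$, and since $\iota$ is an embedding these are exactly the preimages of $G_\delta$-subsets of $\iota(G)$; the latter are traces of $G_\delta$-subsets of $\mathcal{H}(\omega^*)$, which are by definition open in $\mathcal{H}(\omega^*)_{(\delta)}$. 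Hence $\iota$ remains an embedding after $G_\delta$-modification. Finally, because $G$ is a $P$-group we have $G = G_{(\delta)}$, so $G$ itself embeds as a topological subgroup of $\mathcal{H}(\omega^*)_{(\delta)}$, which is exactly universality.

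The main obstacle I anticipate is not any single deep step but the verification of functoriality in the last paragraph: one must be careful that the $G_\delta$-modification of a subgroup agrees with the subspace topology it inherits from the $G_\delta$-modification of the ambient group. This is the standard fact that $(H)_{(\delta)}$ for a subgroup $H \le K$ coincides with the subspace topology from $K_{(\delta)}$ precisely because $G_\delta$-subsets of $H$ are traces of $G_\delta$-subsets of $K$; I would cite or reprove this compatibility, as the whole argument hinges on it. Everything else is a direct concatenation of Theorem~\ref{eerstestelling}, Parovi\v{c}enko's theorem, and the triviality that $G = G_{(\delta)}$ for a $P$-group.
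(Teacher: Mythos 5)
Your proposal is correct and follows essentially the same route as the paper: the paper's (largely implicit) derivation of Theorem~\ref{eerstecor} is exactly Theorem~\ref{eerstestelling} plus Parovi\v{c}enko's theorem under {\ensuremath{\mathsf{CH}}}, combined with the fact recorded in its preliminaries that for a subgroup $H$ of a topological group $K$, $H_{(\delta)}$ is (topologically isomorphic to) the subgroup $H$ of $K_{(\delta)}$, and the triviality that $G=G_{(\delta)}$ for a $P$-group. The compatibility point you flag as the main obstacle is precisely this preliminary fact, and your verification of it is sound.
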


The following is in line with the results in \cite{paro:universal, vm:14} that we discussed above.

\begin{theorem}\label{tweedestelling}
{\ensuremath{\mathsf{CH}}}\ is equivalent to the statement that for all Parovi\v{c}enko spaces $X$ and $Y$, $\mathcal{H}(X)$ and $\mathcal{H}(Y)$ are (algebraically) isomorphic.
\end{theorem}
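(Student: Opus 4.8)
The plan is to prove the two implications separately, the forward one being immediate and the converse carrying all the weight. For the direction {\ensuremath{\mathsf{CH}}}\ $\Rightarrow$ (all such groups isomorphic), I would simply invoke Parovi\v{c}enko's theorem \cite{paro:universal}: under {\ensuremath{\mathsf{CH}}}\ every Parovi\v{c}enko space is homeomorphic to $\omega^*$, so any two Parovi\v{c}enko spaces $X$ and $Y$ are homeomorphic to one another. Fixing a homeomorphism $h\colon X\to Y$, the conjugation map $f\mapsto h\circ f\circ h^{-1}$ is a topological, hence in particular algebraic, isomorphism of $\mathcal H(X)$ onto $\mathcal H(Y)$. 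This needs nothing beyond quoting Parovi\v{c}enko.

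For the converse I would argue contrapositively, and the distinguishing abstract-group invariant I would use is cardinality. Assuming $\neg{\ensuremath{\mathsf{CH}}}$ (so $\mathfrak c\ge\aleph_2$), the goal is to exhibit two Parovi\v{c}enko spaces $X$ and $Y$ with $|\mathcal H(X)|\neq|\mathcal H(Y)|$. Since a Parovi\v{c}enko space $Z$ is compact and zero-dimensional, $\mathcal H(Z)$ is naturally isomorphic to the automorphism group of its clopen algebra $\operatorname{Clop}(Z)$, and $|\operatorname{Clop}(Z)|=w(Z)=\mathfrak c$; an automorphism is a self-map of this algebra, so $|\mathcal H(Z)|\le\mathfrak c^{\mathfrak c}=2^{\mathfrak c}$ for every Parovi\v{c}enko space $Z$. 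The plan is thus to realise simultaneously the extreme value $2^{\mathfrak c}$ and a strictly smaller one.

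Producing a Parovi\v{c}enko space $Y$ with $|\mathcal H(Y)|=2^{\mathfrak c}$ should be routine in {\ensuremath{\mathsf{ZFC}}}: I would run a transfinite construction of length $\mathfrak c$ building an atomless $F$-space Boolean algebra of weight $\mathfrak c$ that carries an independent family of clopen sets of size $\mathfrak c$, all of whose permutations extend to automorphisms, forcing $|\mathcal H(Y)|\ge 2^{\mathfrak c}$ and hence equality. The crux is the opposite extreme: using $\neg{\ensuremath{\mathsf{CH}}}$ I would construct a Parovi\v{c}enko space $X$ with $|\mathcal H(X)|<2^{\mathfrak c}$, say $|\mathcal H(X)|\le\mathfrak c$. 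Here I would adapt the construction of van Douwen and van Mill \cite{vm:14} that, under $\neg{\ensuremath{\mathsf{CH}}}$, yields a Parovi\v{c}enko space not homeomorphic to $\omega^*$; the extra requirement is to pin the structure down so rigidly along a recursion of length $\mathfrak c\ge\aleph_2$ that every autohomeomorphism is already determined by countable data, capping their number below $2^{\mathfrak c}$. Once $X$ and $Y$ are in hand, $|\mathcal H(X)|<2^{\mathfrak c}=|\mathcal H(Y)|$ rules out any isomorphism of abstract groups and completes the contrapositive.

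The hard part will be the construction of $X$: one must maintain all Parovi\v{c}enko axioms at once -- atomlessness, the $F$-space interpolation property, weight exactly $\mathfrak c$, and infinite interiors of nonempty $G_\delta$-sets -- while killing enough candidate automorphisms to keep $|\mathcal H(X)|<2^{\mathfrak c}$. That $\neg{\ensuremath{\mathsf{CH}}}$ is genuinely indispensable here is not accidental but is exactly the feature the recursion must exploit: under {\ensuremath{\mathsf{CH}}}\ Parovi\v{c}enko's theorem collapses every Parovi\v{c}enko space to $\omega^*$, whose homeomorphism group has cardinality $2^{\mathfrak c}$ by Rudin \cite{Rudin56}, so no Parovi\v{c}enko space with a smaller homeomorphism group can exist. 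The whole difficulty therefore concentrates in showing that the extra room provided by a recursion of length $\ge\aleph_2$ suffices to realise the small-group space.
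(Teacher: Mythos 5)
Your forward direction coincides with the paper's: under {\ensuremath{\mathsf{CH}}}\ Parovi\v{c}enko's theorem makes all Parovi\v{c}enko spaces homeomorphic, and conjugation by a homeomorphism gives an (even topological) isomorphism of the homeomorphism groups. The converse, however, contains a genuine gap, and the gap is the entire theorem. Your plan stands or falls with the construction, in an arbitrary model of $\neg${\ensuremath{\mathsf{CH}}}, of a Parovi\v{c}enko space $X$ with $|\mathcal{H}(X)|<2^{\mathfrak c}$, and you do not carry it out: ``pin the structure down so rigidly along a recursion of length $\mathfrak c$ that every autohomeomorphism is determined by countable data'' is an intention, not an argument. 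No such theorem is known, and nothing in $\neg${\ensuremath{\mathsf{CH}}}\ by itself obviously supplies the leverage: your recursion would have to defeat \emph{every} candidate automorphism of an atomless, weakly countably complete Boolean algebra of size $\mathfrak c$ while simultaneously maintaining the $F$-space interpolation property and the condition on $G_\delta$-sets, properties which keep producing disjoint clopen pieces that are candidates for swaps. Moreover, the feasibility of your plan is delicately model-dependent in a way a proof cannot afford to be. In some models of $\neg${\ensuremath{\mathsf{CH}}}\ a small-group Parovi\v{c}enko space does exist ($\omega^*$ itself in Shelah's model, where all homeomorphisms are trivial), but in models of $\mathsf{MA}+\neg${\ensuremath{\mathsf{CH}}}\ with nontrivial automorphisms of $\mathcal{P}(\omega)/\mathrm{fin}$ one has $|\mathcal{H}(\omega^*)|=2^{\mathfrak c}$, and the known $\neg${\ensuremath{\mathsf{CH}}}\ examples (the van Douwen--van Mill spaces) plausibly also have $2^{\mathfrak c}$ homeomorphisms there; you offer no construction that would survive such models, so your cardinality invariant may simply fail to separate anything. (Your space $Y$ is the unproblematic half: $(\omega\times 2^{\mathfrak c})^*$ already has $2^{\mathfrak c}$ homeomorphisms in {\ensuremath{\mathsf{ZFC}}, since distinct homeomorphisms of $2^{\mathfrak c}$ induce distinct homeomorphisms of the remainder, as in Lemma~\ref{eerstelemma}.)

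The paper avoids cardinality altogether, and this is the essential difference. It takes two van Douwen--van Mill Parovi\v{c}enko spaces: $S$, which has exactly one point $p$ of character $\omega_1$ and is the one-point compactification of an open subspace of $\omega^*$, and $T=(\omega\times 2^{\mathfrak c})^*$, all of whose points have character $\mathfrak c$ and all of whose nonempty clopen subsets are copies of $T$. Assuming $\mathcal{H}(S)$, $\mathcal{H}(T)$, $\mathcal{H}(\omega^*)$ are pairwise isomorphic as abstract groups, Rubin's reconstruction theorem (Theorem~\ref{rubin}) converts the isomorphism $\mathcal{H}(T)\cong\mathcal{H}(\omega^*)$ into a homeomorphism $T\approx\omega^*$, so every point of $\omega^*$ has character $\mathfrak c$; then, since $\omega_1<\mathfrak c$ under $\neg${\ensuremath{\mathsf{CH}}}, every homeomorphism of $S$ fixes $p$, whence $\mathcal{H}(S)\cong\mathcal{H}(S\setminus\{p\})$, and a second application of Rubin's theorem yields $S\setminus\{p\}\approx\omega^*$, a contradiction because one space is compact and the other is not. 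The moral: the usable abstract-group invariant here is not size but the reconstruction phenomenon --- for spaces satisfying Rubin's condition, any group isomorphism is induced by a homeomorphism, so purely topological invariants (character, compactness) can be brought to bear. To salvage your route you would first have to prove your rigidity construction in every model of $\neg${\ensuremath{\mathsf{CH}}}, which is a research problem in its own right, not a step in a proof.
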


We also discuss some topological properties of the group $\mathcal{H}(\omega^*)$. We show, among other things, that {\ensuremath{\mathcal{H}(\omega^*)}}\ is not a $P$-group but that all of its countable subsets are closed and discrete.

The class of `small' $P$-groups is rather limited, so our results on universality are not a strong addition to what is known. But they may indicate that the homeomorphism group of $\omega^*$ is by itself an interesting object that is worth studying.

\section{Preliminaries}\label{preliminaries}
\subsection{Topology} For any space $X$, we let $\tau X$ denote its topology, and $\mathcal{H}(X)$ its group of homeomorphisms. We let $e_{\mathcal{H}(X)}$ denote the identity element of the group $\mathcal{H}(X)$, that is, the identity function $\identity_X$ of $X$. If no confusion is likely, we suppress the index. We use standard terminology for cardinal functions. For example, if $X$ is a space, then $w(X)$ and $\chi(X)$ denote its weight and character, respectively (see Juh\'asz~\cite{juhasz}).

If $X$ is a space, then $\beta X$ and $X^*=\beta X\setminus X$ denote its \v{C}ech-Stone compactification and \v{C}ech-Stone remainder, respectively. If $X$ is normal, then disjoint closed subsets of $X$ have disjoint closures in $\beta X$, \cite[3.6.4]{engelking:gentop}. In fact, this property characterizes $\beta X$.

 It is well-known, and easy to prove, that the \v{C}ech-Stone compactification $\beta X$ of $X$ is zero-dimensional iff $X$ is strongly zero-dimensional; that is, disjoint zero-sets in $X$ can be separated by disjoint clopen subsets of $X$, \cite[6.2.12]{engelking:gentop}.

If $C\subseteq X$ is clopen, then $C^* = (\mathrm{cl}_{\beta X} C)\cap X^*$. It is easy to see that if $C$ and $D$ are clopen in $X$ and $C\setminus D$ is compact, then $C^*\subseteq D^*$. Hence if $C\triangle D$ is compact, then $C^*=D^*$; here $\triangle$ denotes symmetric difference of course. Moreover, if $X$ is locally compact and strongly zero-dimensional, and $E\subseteq X^*$ is (relatively) clopen, then there is a clopen subset $C$ of $X$ such that $C^*=E$. This easily implies the well-known fact that all nonempty (relatively) clopen subsets of $\omega^*$ are homeomorphic to $\omega^*$. This will be used frequently in the remaining part of this note.

\subsection{Groups}
 If $G$ and $H$ are groups, then by an \emph{isomorphism} from $G$ to $H$ we will mean a group isomorphism (no topology involved). If $G$ and $H$ are topological groups, then a \emph{topological isomorphism} between $G$ and $H$ is an isomorphism that is also a homeomorphism.

Let $G$ be a topological group with subgroup $H$. It is easily seen that $H_{(\delta)}$ is (topologically isomorphic to) the subgroup $H$ of $G_{(\delta)}$.

If $\mathcal{K}$ is a class of topological groups, then $G\in \mathcal{K}$ is said to be \emph{universal} (for $\mathcal{K}$) provided that every member of $\mathcal{K}$ is (topologically isomorphic to) a subgroup of $G$.

\subsection{The topological groups $\mathcal{H}(X)$ for compact zero-dimensional $X$}\label{prelimeen}
Let $X$ be a compact space. We endow $\mathcal{H}(X)$ with the standard \emph{compact-open} topology. That is the topology for which the collection of all $[K,U]$'s is a subbase for the open sets; here $K$ is compact, $U\in\tau X$, and
$$
    [K,U] = \{f\in \mathcal{H}(X) : f(K)\subseteq U\}. \leqno{(\dag)}
$$
It is well-known that by compactness of $X$, this topology makes $\mathcal{H}(X)$ a topological group. In fact, the natural action $\mathcal{H}(X)\times X\to X$ defined by $(g,x)\mapsto g(x)$, $x\in X$, is a (continuous) action of the topological group $\mathcal{H}(X)$ on $X$.

If $X$ is compact and zero-dimensional, then we can simplify $(\dag)$, as follows. If $f\in [K,U]$, then we may pick a clopen subset $F$ of $X$ such that $K\subseteq F \subseteq f^{-1}(U)$. Hence $f\in [F,G]\subseteq [K,U]$, where both $F$ and $G = f(F)$ are clopen. This implies that $w(\mathcal{H}(X))\le w(X)$. For $C\subseteq X$ clopen, we put
$$
    \hat C= [C,C] \cap [X\setminus C,X\setminus C]= \{f\in \mathcal{H}(X) : f(C) = C\}.
$$
Clearly, $\hat C$ is an open subgroup of $\mathcal{H}(X)$, hence even a clopen subgroup. A moments reflection shows that if $\mathcal{E}$ is a finite clopen partition of $X$, then $N(\mathcal{E})= \{f\in \mathcal{H}(X): (\forall\, E\in \mathcal{E})(f(E)=E)\}$ is a clopen subgroup of $\mathcal{H}(X)$, and that the collection of all $N(\mathcal{E})$'s is a neighborhood base at $e$ in $\mathcal{H}(X)$.
%More generally, if $\mathcal{A}$ is a finite clopen partitions of $X$, then for all $f\in \mathcal{H}(X)$, $N(f,\mathcal{A}) = \{g\in \mathcal{H}(X): (\forall\, A\in \mathcal{A})(f(A)=g(A))\}$ is a clopen neighborhood of $f$ in $\mathcal{H}(X)$ and the collection of all these sets forms a neighborhood basis at $f$.
This is well-known, see e.g., \cite[3.2]{MegrelishviliScarr01}. (There is a typo in the proof, though: op page 272, line 13. \emph{`base'} should be changed to \emph{`subbase'}.) %\rood{Hence if $f\in \mathcal{H}$ is arbitrary, then a basic clopen neighborhood of $f$ has the form $N(f;\mathcal{E}) = \{ g\in \mathcal{H}(X) : (\forall\, E\in \mathcal{E})(g(E)=f(E))\}$, where $\mathcal{E}$ is an arbitrary finite clopen partition of $X$. Gebruikt?}

A topological group is \emph{non-archimedean} if it has a local base at the identity consisting of open (hence clopen) subgroups. Hence $\mathcal{H}(X)$ for compact zero-dimensional $X$ is non-archimedean. It is easy to see that $P$-groups are non-archimedean as well, \cite[4.4.1]{ArhangTkachenko08}.

%\begin{lemma}\label{basic}
%Let $X$ be zero-dimensional and compact. Then
%\begin{enumerate}
%\item A basic open neighborhood of $e$ in $\mathcal{H}(X)$ has the form $\bigcap_{C\in \mathcal{C}} \hat C$, where $\mathcal{C}$ is a pairwise disjoint finite collection clopen subsets of $X$.
%\item A basic open neighborhood of $e$ in $\mathcal{H}(X)_{(\delta)}$ has the form $\bigcap_{C\in \mathcal{C}} \hat C$, where $\mathcal{C}$ is a pairwise disjoint countable collection clopen subsets of $X$. \rood{???}
%\end{enumerate}
%\end{lemma}

\subsection{Teleman's Theorem and generalizations}
%Every topological group $G$ acts on itself. Simply define
%$(g, h) \mapsto gh \colon G \times G \to G$. An \emph{equivariant compactification} of the topological group $G$ is a compactification $bG$ of $G$ such that the natural action $G\times G\to G$ can be extended to an action $G\times bG\to bG$. It is known that every topological group has equivariant compactifications. This is nontrivial and is based on the fact that topological groups have `many' right-uniformly continuous realvalued functions. (See de Vries~\cite{jandv75} for more information.)

In \cite{teleman57}, Teleman showed (among other things) that every topological group is topologically isomorphic to a subgroup of $\mathcal{H}(X)$, for some compact space $X$.  We need a version of his result for compact zero-dimensional spaces. Here are the results that we need:

\begin{theorem}[{Megrelishvili and Shlossberg~\cite[3.2]{MegrelishviliShlossberg12}}]\label{megresh} Let $G$ be a topological group.
%\begin{enumerate}
The following statements are equivalent:
\begin{enumerate}
\item $G$ is non-archimedean.
\item $G$ is (topologically isomorphic to) a topological subgroup of $\mathcal{H}(X)$, for some compact zero-dimensional space $X$ with $w(X) = w(G)$.
\end{enumerate}
%\item (Megrelishvili~\cite[2.13]{Megrelishvili89}) If $G$ is non-\arch, then it has an equivariant zero-dimensional compactification $bG$ such that $w(bG)=w(G)$.
%\end{enumerate}
\end{theorem}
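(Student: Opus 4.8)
The plan is to prove the two implications separately; $(2)\Rightarrow(1)$ is immediate from the preliminaries, while $(1)\Rightarrow(2)$ carries all the content.

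For $(2)\Rightarrow(1)$: as recorded in Subsection~\ref{prelimeen}, for compact zero-dimensional $X$ the clopen subgroups $N(\mathcal{E})$, with $\mathcal{E}$ ranging over the finite clopen partitions of $X$, form a neighborhood base at $e_{\mathcal{H}(X)}$, so $\mathcal{H}(X)$ is non-archimedean. If $G$ is a topological subgroup, then the traces $N(\mathcal{E})\cap G$ are clopen subgroups of $G$ forming a neighborhood base at the identity, whence $G$ is non-archimedean as well. (The estimate $w(G)\le w(\mathcal{H}(X))\le w(X)$ is also available there, and will be reused below.)

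For $(1)\Rightarrow(2)$: I would assume $w(G)$ infinite, the finite case being handled by the left regular representation on a finite discrete space. By non-archimedeanness, fix a neighborhood base $\{U_\alpha:\alpha<\chi(G)\}$ at the identity consisting of open, hence clopen, subgroups. Let $S=\bigsqcup_\alpha G/U_\alpha$ be the disjoint union of the discrete left coset spaces, and let $G$ act on $S$ blockwise by left translation, $g\cdot(hU_\alpha)=(gh)U_\alpha$. This action is faithful, since $\phi(g)=\identity$ forces $g\in U_\alpha$ for every $\alpha$ and $\bigcap_\alpha U_\alpha=\{e\}$ as $G$ is Hausdorff; thus it yields an injective homomorphism $\phi\colon G\to\mathrm{Sym}(S)$, where $\mathrm{Sym}(S)$ carries the topology of pointwise convergence. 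I would then check that $\phi$ is a topological embedding: continuity holds because the $\phi$-preimage of the basic stabilizer of a point $hU_\alpha$ is the conjugate $hU_\alpha h^{-1}$, an open subgroup; and $\phi$ is open onto its image because the stabilizer in $\mathrm{Sym}(S)$ of the base coset $eU_\alpha$ meets $\phi(G)$ exactly in $\phi(U_\alpha)$, so the given base of $G$ maps to relatively open sets. It remains to place $\mathrm{Sym}(S)$ inside $\mathcal{H}(X)$ for a suitable $X$. I would take $X=\alpha S$, the one-point compactification of the discrete space $S$, which is compact, zero-dimensional, and of weight $|S|$. Since $\infty$ is the unique non-isolated point, every self-homeomorphism of $\alpha S$ fixes it and restricts to a permutation of $S$, giving a group isomorphism $\mathrm{Sym}(S)\cong\mathcal{H}(\alpha S)$; composing with $\phi$ embeds $G$ algebraically into $\mathcal{H}(\alpha S)$.

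The heart of the matter — and the step I expect to be the main obstacle — is to verify that this algebraic embedding is \emph{topological}, i.e.\ that the compact-open topology on $\mathcal{H}(\alpha S)$ restricts on the image to the topology of pointwise convergence. A priori it could be strictly finer, since $\alpha S$ has many infinite compact subsets: every set $\{\infty\}\cup D$ with $D\subseteq S$ is compact, and the corresponding compact-open sets need not be open for pointwise convergence. The resolution is to work with the neighborhood base $\{N(\mathcal{E})\}$ at the identity. A finite clopen partition $\mathcal{E}$ of $\alpha S$ consists of finitely many finite subsets of $S$ together with one cofinite cell containing $\infty$, so $N(\mathcal{E})$ constrains a homeomorphism only on a finite subset of $S$; as $\mathcal{E}$ varies, the $N(\mathcal{E})$ run through the setwise stabilizers of the finite subsets of $S$, and taking $\mathcal{E}$ with singleton cells recovers the pointwise stabilizers. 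Hence the $N(\mathcal{E})$ generate exactly the neighborhood filter at the identity of the topology of pointwise convergence, and since a group topology is determined by this filter, the two topologies agree; thus $\phi$ followed by the inclusion is a topological embedding. For the weight I would finally note $|S|=\sum_\alpha|G/U_\alpha|\le\chi(G)\cdot w(G)=w(G)$, using that each open subgroup yields a discrete quotient of weight $|G/U_\alpha|\le w(G)$; combined with the estimate $w(G)\le w(\mathcal{H}(\alpha S))\le w(\alpha S)=|S|$ from the reverse implication, this forces $w(X)=w(\alpha S)=|S|=w(G)$, as required.
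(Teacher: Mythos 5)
Your proof is correct, but note that the paper contains no proof of this statement at all: it is imported verbatim from Megrelishvili and Shlossberg~\cite{MegrelishviliShlossberg12}, so the only comparison available is with that cited source, whose argument yours essentially reproduces. The route is the standard one: embed $G$ into $\mathrm{Sym}(S)$ (pointwise topology) via the left action on the sum $S=\bigsqcup_\alpha G/U_\alpha$ of the discrete coset spaces of a base of open subgroups, identify $\mathrm{Sym}(S)$ with $\mathcal{H}(X)$ for $X$ the one-point compactification of $S$, and recover $w(X)=w(G)$ by combining $|S|\le \chi(G)\cdot w(G)=w(G)$ with the reverse estimate $w(G)\le w(\mathcal{H}(X))\le w(X)=|S|$ coming from the embedding itself. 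You also correctly isolated and settled the one genuinely delicate point, namely that the compact-open topology on $\mathcal{H}(X)$ restricts to pointwise convergence: the clopen subgroups $N(\mathcal{E})$ of \S\ref{prelimeen} and the pointwise stabilizers of finite subsets of $S$ generate the same neighborhood filter at the identity, and two group topologies with the same filter at the identity coincide.
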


%\ref{megresh}(2) was stated explicitly in the proof of \cite[3.2]{MegrelishviliShlossberg12}, implication $(3) \Rightarrow (4)$. In \cite[2.13]{Megrelishvili89}, there is actually an extra condition that was shown to hold later by a result of Pestov~\cite[3.4]{Pestov98}.

\subsection{Rubin's Theorem}
 A subset of a topological space is \emph{somewhere dense} if its closure has nonempty interior.

 Let $X$ be a space, and $G$ a subgroup of $\mathcal{H}(X)$. We say that the pair $\\langle{X},{G}\rangle$ satisfies \emph{Rubin's condition $\mathfrak{R}$}, abbreviated $\langle{X},{G}\rangle\in \mathfrak{R}$, if the following condition holds:

 \begin{enumerate}
 \item[$(\mathfrak{R})$]  for every
open $U \subseteq X$ and $x \in U$, $\{g(x) :g\in G$ and $g{\restriction}(X \setminus U) = \identity_{X\setminus U}\}$ is somewhere dense.
 \end{enumerate}

 We will need the following deep result:

 \begin{theorem}[{Rubin~\cite[Corollary 3.5(c)]{RubinM89}}]\label{rubin}
If $\langle{X},{G}\rangle, \langle{Y},{H}\rangle\in \mathfrak{R}$ and both $X$ and $Y$ are locally compact, then for each isomorphism $\varphi\colon G\to H$, there exists a homeomorphism $f\colon X\to Y$ such that for all $g\in G$, $f \circ g= \varphi(g)\circ f$.
 \end{theorem}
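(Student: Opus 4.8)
The plan is to reconstruct the space $X$, together with the action of $G$ on it, purely from the abstract group structure of $G$, and to do the same for $Y$; a group isomorphism $\varphi\colon G\to H$ then transports one reconstruction onto the other and yields the desired homeomorphism $f$. The central device is the \emph{support} of a group element: for $g\in G$ put $\mathrm{supp}(g)=\mathrm{cl}\,\{x\in X: g(x)\neq x\}$, a set whose interior is regular open. The key idea is that the relevant topological relations between supports --- disjointness, and (regular) inclusion --- can be captured in purely group-theoretic terms. For example, elements with disjoint supports commute; the converse fails in general, but condition $\mathfrak{R}$ furnishes enough ``small'' elements supported inside prescribed open sets that disjointness of supports can be characterised by a first-order condition in the language of groups involving commutation with an ample supply of such witnesses.

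First I would isolate, for each open $U\subseteq X$, the subgroup $G_{[U]}=\{g\in G: g\restriction(X\setminus U)=\identity\}$ of elements supported in $U$. Condition $\mathfrak{R}$ guarantees that $G_{[U]}$ is ``large'' for every nonempty open $U$, since for $x\in U$ the orbit $\{g(x):g\in G_{[U]}\}$ is somewhere dense. Using these subgroups I would build, by group-theoretic definitions only, a lattice isomorphic to a dense sublattice of the regular open algebra $\mathrm{RO}(X)$: the relevant regular open sets correspond to distinguished subgroups of the form $G_{[U]}$, and the operations of inclusion, meet and join of regular open sets translate into inclusion, intersection and join of these subgroups. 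The upshot is that $\mathrm{RO}(X)$, with its lattice structure, becomes a group-theoretic invariant of $G$, so that $\varphi$ induces a lattice isomorphism $\mathrm{RO}(X)\to\mathrm{RO}(Y)$.

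Next I would recover the \emph{points} of $X$ as suitable convergent filters on the reconstructed lattice, selecting exactly those that ``converge'' in $X$; here local compactness is essential, because it lets me distinguish the filters arising from genuine points of $X$ --- those admitting a neighbourhood base of regular open sets with compact closures --- from the spurious ones attached to the absolute. This produces a bijection between the points of $X$ and of $Y$, hence a candidate map $f\colon X\to Y$. Since $f$ is defined from the $\varphi$-induced lattice isomorphism, it is automatically a homeomorphism, and the conjugacy relation $f\circ g=\varphi(g)\circ f$ holds because $\varphi$ sends $G_{[U]}$ to $H_{[f(U)]}$ and therefore intertwines the two actions on the reconstructed data.

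I expect the main obstacle to be exactly the \emph{group-theoretic definability} of the topological relations: proving that statements such as ``$\mathrm{supp}(g)$ and $\mathrm{supp}(h)$ are disjoint'', ``$\mathrm{supp}(g)\subseteq\mathrm{supp}(h)$'', and ultimately ``$x\in\mathrm{supp}(g)$'' can be expressed by formulas in the language of groups that are necessarily preserved by every abstract isomorphism $\varphi$. This is the deep technical core of Rubin's method: condition $\mathfrak{R}$ is precisely the hypothesis that makes the relevant supports abundant enough for such definitions to go through, while local compactness is what allows the abstractly reconstructed lattice of supports to be collapsed back onto an actual locally compact space. Once these definability lemmas are in place, the construction of $f$ and the verification of the conjugacy equation are comparatively formal.
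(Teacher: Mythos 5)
Note first that the paper does not prove this statement at all: it is quoted verbatim from Rubin's paper (Corollary 3.5(c) of \emph{On the reconstruction of topological spaces from their groups of homeomorphisms}), and the author explicitly flags it as a ``deep result'' being imported. So there is no in-paper proof to compare against; the only fair comparison is with Rubin's own argument, whose strategy your sketch does correctly echo at the level of headlines: supports of group elements, group-theoretic definability of relations between supports, reconstruction of (a dense sublattice of) the regular open algebra, recovery of points, and transport of all this along $\varphi$.

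The genuine gap is that your proposal is a plan, not a proof: the entire mathematical content of Rubin's theorem lives precisely in the ``definability lemmas'' you defer. You acknowledge this (``I expect the main obstacle to be exactly the group-theoretic definability''), but deferring it means nothing has been established --- these lemmas occupy the bulk of Rubin's paper and require machinery (local movement systems, interpretability of the regular open algebra in the first/second-order theory of the group) that condition $\mathfrak{R}$ alone does not hand you for free. Two concrete soft spots: (i) your point-recovery step selects filters ``admitting a neighbourhood base of regular open sets with compact closures,'' but compactness of closures is not a lattice-theoretic notion, so this criterion is itself something that must be defined group-theoretically and proved invariant under $\varphi$ --- another definability lemma, not a triviality; this matters because, e.g., $\mathrm{RO}([0,1])\cong\mathrm{RO}((0,1))$ and $\mathcal{H}([0,1])\cong\mathcal{H}((0,1))$ as abstract groups while the spaces are not homeomorphic (condition $\mathfrak{R}$ fails for the pair $\langle[0,1],\mathcal{H}([0,1])\rangle$, which is exactly what saves the theorem, so your construction must use $\mathfrak{R}$ in the point recovery and not merely in the lattice stage). (ii) The closing claim that $f$ is ``automatically a homeomorphism'' and that the conjugacy equation is ``comparatively formal'' hides the verification that the reconstructed bijection is continuous in both directions and intertwines the actions pointwise; in Rubin's development this is a genuine argument, not bookkeeping. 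In short: correct map of the territory, but the theorem itself remains unproved by what you wrote.
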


 We will us this result in the following case. The space $X$ is locally compact and zero-dimensional, has a clopen base $\mathcal{B}$ of pairwise homeomorphic sets, and $G=\mathcal{H}(X)$. Hence the elements of $\mathcal{B}$ are compact. We claim that $\langle{X},{\mathcal{H}(X)}\rangle\in \mathfrak{R}$. Indeed, if $U\subseteq X$ is open, then for each $x\in U$, $O(x)=\{g(x) :g\in \mathcal{H}(X)$ and $g{\restriction}(X \setminus U) = \identity_{X\setminus U}\}$, is dense in $U$. For take an arbitrary nonempty $B_1\in \mathcal{B}$ such that $B_1\subseteq U$. We may assume that $x\not\in B_1$. Take $B_0\in \mathcal{B}$ such that $x\in B_0\subseteq U\setminus B_1$. Let $\xi\colon B_0\to B_1$ be any homeomorphism, and define $f\colon X\to X$ by
 $$
    f(y) = \begin{cases}
            \xi(y) & (y\in B_0),\\
            \xi^{-1}(y) & (y\in B_1),\\
            y     & (y\in X\setminus (B_0\cup B_1).
           \end{cases}
 $$
Then $f\in \mathcal{H}(X)$, $f{\restriction}(X \setminus U) = \identity_{X\setminus U}$,  and $f(x)\in B_1\cap O(x)$.

We impose similar conditions on $Y$.

\section{Proofs of Theorems \ref{eerstestellingzonder} and \ref{eerstecorzonder}}

Let $X$ be a fixed nonempty compact and zero-dimensional space. Put $Z= \omega\times X$; here $\omega$ has the discrete topology. Then $Z$ is locally compact, and so $Z^*$ is compact. If $X$ has weight at most ${\mathfrak c}$, then $Z$ has at most ${\mathfrak c}^\omega={\mathfrak c}$ clopen sets, so $Z^*$ is a Parovi\v{c}enko space by \cite[1.2.5]{vanmill:betaomega}. Therefore, $Z^*$ and $\omega^*$ are homeomorphic under {\ensuremath{\mathsf{CH}}}. The interplay between $\mathcal{H}(X)$ and $\mathcal{H}(Z^*)$ will be used here for proving \ref{eerstestelling}.

Since $Z$ is strongly zero-dimensional, each (relatively) clopen subset of $Z^*$ has the form $F^*$, where $F\subseteq \omega\times X$ is clopen. So a standard subbasic neighborhood of some $g\in \mathcal{H}(Z^*)$ has, by what we observed in \S\ref{prelimeen}, the form $[F^*,G^*]$, where $F$ and $G$ are arbitrary clopen subsets of $Z$.

If $f\in \mathcal{H}(X)$, we define $f^\beta\in \mathcal{H}(Z^*)$, as follows. We first let $f^\omega$ denote the homeomorphism of $Z$ defined by $f^\omega(\langle{n},{x}\rangle) = \langle{n},{f(x)}\rangle$ $(n < \omega, x\in X)$. Then we let $\beta f^\omega\colon \beta Z \to \beta Z$ be its Stone extension. Finally, let $f^\beta = \beta f^\omega\restriction  Z^*$. Clearly, $f^\beta\in \mathcal{H}( Z^*)$. Define $i\colon \mathcal{H}(X)\to \mathcal{H}( Z^*)$ by $i(f) = f^\beta$.

\begin{lemma}\label{eerstelemma}
$i$ is an injective homomorphism.
\end{lemma}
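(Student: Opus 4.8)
The plan is to verify the two required properties of the map $i$ directly from the definition $i(f)=f^\beta$, exploiting the fact that Stone extension is functorial. Both the homomorphism property and injectivity should reduce to corresponding statements about the auxiliary map $f\mapsto f^\omega$ on the locally compact space $Z=\omega\times X$, which are elementary.

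First I would establish that $f\mapsto f^\omega$ is an injective homomorphism from $\mathcal{H}(X)$ into $\mathcal{H}(Z)$. The homomorphism property is immediate: $(f\circ h)^\omega\bigl(\langle n,x\rangle\bigr)=\langle n,(f\circ h)(x)\rangle=\langle n,f(h(x))\rangle=f^\omega\bigl(\langle n,h(x)\rangle\bigr)=\bigl(f^\omega\circ h^\omega\bigr)\bigl(\langle n,x\rangle\bigr)$, so $(f\circ h)^\omega=f^\omega\circ h^\omega$. Injectivity is equally clear: if $f^\omega=h^\omega$ then comparing second coordinates on, say, $\{0\}\times X$ forces $f=h$. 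In particular, $f^\omega=\identity_Z$ only when $f=\identity_X$.

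Next I would push these facts through the Stone extension. The key point is that taking $\beta$ of a map is functorial: for homeomorphisms $\varphi,\psi$ of the locally compact (hence Tychonoff) space $Z$ one has $\beta(\varphi\circ\psi)=\beta\varphi\circ\beta\psi$ and $\beta(\identity_Z)=\identity_{\beta Z}$, because $\beta\varphi\circ\beta\psi$ is a continuous extension of $\varphi\circ\psi$ to $\beta Z$ and such an extension is unique. Restricting to the invariant remainder $Z^*$ preserves composition, so from $(f\circ h)^\omega=f^\omega\circ h^\omega$ I get $(f\circ h)^\beta=f^\beta\circ h^\beta$, i.e. $i(f\circ h)=i(f)\circ i(h)$; this is the homomorphism property.

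Finally, for injectivity of $i$ it suffices to show its kernel is trivial, i.e. that $f^\beta=\identity_{Z^*}$ forces $f=\identity_X$. Suppose $f\neq\identity_X$, so $f(x)\neq x$ for some $x\in X$. Since $X$ is compact zero-dimensional I can separate $x$ and $f(x)$ by a clopen set $C\subseteq X$ with $x\in C$ and $f(x)\notin C$; then $f^\omega$ maps the clopen set $\omega\times C$ off itself in a way that is not eventually trivial, so the induced map on $Z^*$ moves the clopen set $(\omega\times C)^*$. Concretely, the clopen subset $F=\omega\times C$ of $Z$ satisfies $f^\omega(F)\cap F$ contained in $\omega\times(C\cap f^{-1}(C))$, and one checks this discrepancy is non-compact, so $f^\beta(F^*)\neq F^*$ and $f^\beta\neq\identity_{Z^*}$. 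I expect this last step to be the only real obstacle: one must confirm that the pointwise discrepancy between $f^\omega$ and the identity survives passage to the remainder, where compact (that is, $\omega$-indexed ``finite'') differences are invisible. The product structure with the infinite discrete factor $\omega$ is precisely what guarantees the discrepancy is spread across infinitely many copies and hence non-compact, so it persists in $Z^*$.
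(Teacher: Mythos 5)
Your proposal is correct, but your injectivity argument takes a different route from the paper's. The paper argues directly on points: given distinct $f,g\in\mathcal{H}(X)$, it picks $x$ with $f(x)\neq g(x)$, notes that $S=\omega\times\{f(x)\}$ and $T=\omega\times\{g(x)\}$ are disjoint closed subsets of the normal space $Z$ and hence have disjoint closures in $\beta Z$, then evaluates at any point $p$ in the (nonempty) remainder of $\omega\times\{x\}$ to get $f^\beta(p)\in\mathrm{cl}_{\beta Z}S$ while $g^\beta(p)\in\mathrm{cl}_{\beta Z}T$, so $f^\beta(p)\neq g^\beta(p)$. You instead reduce to showing the kernel is trivial (legitimate, once the homomorphism property is in hand) and work with clopen sets rather than points: a nontrivial $f$ displaces $(\omega\times C)^*$ because the discrepancy $f^\omega(\omega\times C)\mathbin{\triangle}(\omega\times C)=\omega\times(f(C)\mathbin{\triangle}C)$ contains $\omega\times\{f(x)\}$ and is therefore noncompact, hence visible in $Z^*$. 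Both proofs rest on the same phenomenon -- a single pointwise difference in $X$ replicates along $\omega$ into a noncompact closed set, which normality of $Z$ keeps separated in the remainder -- but yours routes it through the correspondence between clopen subsets of $Z^*$ and clopen subsets of $Z$ modulo compact symmetric difference, which is exactly the machinery the paper deploys in the subsequent lemmas, whereas the paper's point evaluation is more elementary and self-contained here. One slip, though a harmless one: your containment $f^\omega(F)\cap F\subseteq\omega\times(C\cap f^{-1}(C))$ is wrong as written (the intersection equals $\omega\times(f(C)\cap C)$, which need not lie in $\omega\times f^{-1}(C)$); but this claim does no work, since what your argument actually needs is that $f^\omega(F)\setminus F\supseteq\omega\times\{f(x)\}$ is noncompact, which is immediate from $f(x)\notin C$.
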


\begin{proof}
That $i$ is a homomorphism is clear.

To check that it is injective, take distinct $f,g\in \mathcal{H}(X)$. There exists $x\in X$ such that $f(x)\not= g(x)$. Then $S= \omega\times \{f(x)\}$ and $T= \omega\times \{g(x)\}$ are disjoint closed subsets of $Z$, hence, by normality of $Z$, have disjoint closures in $\beta Z$. Pick an arbitrary $p\in \mathrm{cl}_{\beta Z} (\omega\times \{x\})\setminus (\omega\times \{x\})$. Then $\beta f^\omega(p)\in \mathrm{cl}_{\beta X}(S)$ and  $\beta g^\omega(p)\in \mathrm{cl}_{\beta X}(T)$, and so $f^\beta(p)\not= g^\beta(p)$.
\end{proof}

\begin{lemma}\label{zaterdageen}
Let $F$ and $G$ be clopen in $Z$ and $f\in \mathcal{H}(X)$ such that $f^\beta\in [F^*,G^*]$. Then
$\{ n < \omega : f^\omega(F\cap (\{n\}\times X))\not\subseteq G\cap (\{n\}\times X)\}$ is finite.
\end{lemma}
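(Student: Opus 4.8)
The plan is to translate the topological condition $f^\beta\in [F^*,G^*]$ into a statement living inside $Z$ itself, and then to read off the conclusion from the product structure $Z=\omega\times X$.

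First I would record the identity $f^\beta(F^*) = (f^\omega(F))^*$. This is immediate from the definitions: $\beta f^\omega$ is a self-homeomorphism of $\beta Z$ that carries $Z$ onto $Z$, hence $Z^*$ onto $Z^*$, and it carries $\mathrm{cl}_{\beta Z}(F)$ onto $\mathrm{cl}_{\beta Z}(f^\omega(F))$; intersecting with $Z^*$ and using $f^\beta=\beta f^\omega\restriction Z^*$ gives the identity. Consequently the hypothesis $f^\beta(F^*)\subseteq G^*$ is exactly $(f^\omega(F))^*\subseteq G^*$. Writing $C=f^\omega(F)$ and $D=G$, both clopen in $Z$, the problem reduces to extracting information from $C^*\subseteq D^*$.

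Next I would establish the converse of the fact noted in the preliminaries, namely that $C^*\subseteq D^*$ forces $C\setminus D$ to be compact. The sets $C\setminus D$ and $D$ are disjoint closed (indeed clopen) subsets of the normal space $Z$, so by the disjoint-closures property recorded in \S\ref{preliminaries} their closures in $\beta Z$ are disjoint, whence $(C\setminus D)^*\cap D^*=\emptyset$. On the other hand $(C\setminus D)^*\subseteq C^*\subseteq D^*$, so $(C\setminus D)^*=\emptyset$. A clopen subset of $Z$ with empty remainder equals its own closure in $\beta Z$, hence is compact; therefore $f^\omega(F)\setminus G$ is compact.

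Finally I would use the slice structure. The projection $Z\to\omega$ is continuous and $\omega$ is discrete, so every compact subset of $Z$ meets only finitely many slices $\{n\}\times X$; in particular $f^\omega(F)\setminus G$ does. Since $f^\omega$ preserves slices, i.e. $f^\omega(\{n\}\times X)=\{n\}\times X$, we have $f^\omega(F\cap(\{n\}\times X))=f^\omega(F)\cap(\{n\}\times X)$, so the indices $n$ in question are precisely those with $(f^\omega(F)\setminus G)\cap(\{n\}\times X)\neq\emptyset$, a finite set. The only mildly non-routine point is the converse implication $C^*\subseteq D^*\Rightarrow C\setminus D$ compact of the second step, since the preliminaries state only the forward direction; but as indicated this follows directly from the disjoint-closures property in the normal space $Z$, and the rest is bookkeeping with Stone extensions and slices.
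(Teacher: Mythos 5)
Your proof is correct, but it is organized differently from the paper's. The paper argues by contradiction and pointwise: it selects witnesses $\langle n,x_n\rangle\in F$ with $f^\omega(\langle n,x_n\rangle)\notin G$ for infinitely many $n$, takes a limit point $p\in F^*$ of this set, and uses normality of $Z$ to see that $f^\beta(p)$ lies in the remainder of the closed set $L=\{\langle n,f(x_n)\rangle : n\in A\}$, which is disjoint from $\mathrm{cl}_{\beta Z}G$, contradicting $f^\beta(p)\in G^*$. You instead argue directly and globally: you first record the identity $f^\beta(F^*)=(f^\omega(F))^*$, reducing the hypothesis to $(f^\omega(F))^*\subseteq G^*$, then prove the converse of the fact stated in \S\ref{preliminaries} -- for clopen $C,D\subseteq Z$, $C^*\subseteq D^*$ forces $C\setminus D$ to be compact -- and finish by noting that a compact subset of $Z=\omega\times X$ meets only finitely many slices. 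Both arguments rest on the same engine (disjoint closed sets in the normal space $Z$ have disjoint closures in $\beta Z$), but your decomposition isolates two reusable facts: the compatibility of $f^\beta$ with remainders of clopen sets, and the full equivalence ``$C^*\subseteq D^*$ iff $C\setminus D$ is compact'' for clopen sets, of which the paper states only the forward half. This buys a cleaner, choice-free argument and a lemma that would also streamline the proof of Lemma \ref{tweedelemma}; the paper's version buys self-containment, using only the forward facts already on record plus an elementary limit-point argument.
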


\begin{proof}
If not, then we may pick an infinite subset $A$ of $\omega$ and for each $n\in A$ an element $x_n\in X$ such that $\langle{n},{x_n}\rangle\in F\cap (\{n\}\times X)$ but $f^\omega(\langle{n},{x_n}\rangle)\not\in G\cap (\{n\}\times X)$. Let $p$ be a limit point of $\{\langle{n},{x_n}\rangle: n\in A\}$ in $Z^*$. Then, clearly, $p\in F^*$. Let $q= f^\beta(p)$. Then $q\in Z^*$ and hence by our assumptions, $q\in G^*$. Moreover, it is a limit point of $L=\{\langle{n},{f(x_n)\rangle} : n\in A\}$. Now $L$ is a closed subset of $Z$ which is disjoint from $G$. Hence, by normality of $Z$, $\mathrm{cl}_{\beta X} L \cap \mathrm{cl}_{\beta X} G = \emptyset$. So $q\not\in G^*$, which is a contradiction.
\end{proof}

\begin{lemma}\label{tweedelemma} ${}$
%\begin{enumerate}
%\item
$i\colon \mathcal{H}(X)_{(\delta)}\to \mathcal{H}(Z^*)$ is continuous.
%\item if $X=bG$, then $i\colon G\to \mathcal{H}((\omega\times X)^*)$ is an embedding.
%\end{enumerate}
\end{lemma}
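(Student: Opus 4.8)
The plan is to show continuity of $i\colon \mathcal{H}(X)_{(\delta)}\to \mathcal{H}(Z^*)$ by verifying it at each point $f\in\mathcal{H}(X)$. Since the domain carries the $G_\delta$-modification topology, I may use any $G_\delta$-subset of $\mathcal{H}(X)$ containing $f$ as a neighborhood of $f$; my task is, given a basic open neighborhood of $i(f)=f^\beta$ in $\mathcal{H}(Z^*)$, to produce such a $G_\delta$-set that $i$ carries into it. By the discussion in \S\ref{prelimeen} specialized to $Z^*$, a basic neighborhood of $f^\beta$ has the form $N(\mathcal{E})\cdot f^\beta$ for a finite clopen partition $\mathcal{E}$ of $Z^*$, or equivalently is an intersection of finitely many subbasic sets $[F^*,G^*]$ with $F,G$ clopen in $Z$. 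Since finite intersections of $G_\delta$-sets are $G_\delta$, it suffices to handle a single subbasic neighborhood $[F^*,G^*]$ containing $f^\beta$.

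So fix clopen $F,G\subseteq Z$ with $f^\beta\in[F^*,G^*]$. The engine is Lemma~\ref{zaterdageen}: it tells me that the set $B=\{n<\omega : f^\omega(F\cap(\{n\}\times X))\not\subseteq G\cap(\{n\}\times X)\}$ is finite. The idea is to read off from this a finite list of ``slice'' constraints on a homeomorphism $g\in\mathcal{H}(X)$ that forces $g^\beta\in[F^*,G^*]$, and to check that the set of $g$ satisfying these constraints is a $G_\delta$ (indeed I expect it to be open-or-better) neighborhood of $f$ in $\mathcal{H}(X)$. Concretely, for each $n\notin B$ the slice $F\cap(\{n\}\times X)$ projects to a clopen $F_n\subseteq X$ and $G\cap(\{n\}\times X)$ to a clopen $G_n\subseteq X$, and I want the condition ``$g(F_n)\subseteq G_n$ for all but finitely many $n$'' — since altering $g^\beta$ on finitely many slices does not change its behaviour on $Z^*$, the finitely many bad coordinates (those in $B$, plus any slices I choose to discard) are harmless because $C^*=D^*$ whenever $C\triangle D$ is compact.

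First I would make precise which clopen subsets of $X$ occur as slices: the projections $\{F_n : n<\omega\}$ and $\{G_n : n<\omega\}$ are families of clopen subsets of $X$, and the relevant containment $g(F_n)\subseteq G_n$ is exactly membership of $g$ in the set $[F_n,G_n]\cap\mathcal{H}(X)$, which is open in $\mathcal{H}(X)$ since $F_n$ is clopen hence compact. Then the candidate neighborhood of $f$ is
$$
W=\bigcap_{n\notin B}\{g\in\mathcal{H}(X) : g(F_n)\subseteq G_n\}.
$$
This is a countable intersection of open subsets of $\mathcal{H}(X)$, hence a $G_\delta$-set, hence open in $\mathcal{H}(X)_{(\delta)}$; and $f\in W$ precisely because $n\notin B$ means $f^\omega(F\cap(\{n\}\times X))\subseteq G\cap(\{n\}\times X)$, i.e.\ $f(F_n)\subseteq G_n$. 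It remains to verify $i(W)\subseteq[F^*,G^*]$: for $g\in W$ the slicewise containment holds off the finite set $B$, so $g^\omega(F)\setminus G$ is contained in the compact set $\bigcup_{n\in B}(\{n\}\times X)$, whence $g^\omega(F)\setminus G$ is compact and $g^\beta(F^*)=(g^\omega(F))^*\subseteq G^*$; that is $g^\beta\in[F^*,G^*]$.

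The main obstacle I anticipate is the last verification, specifically the identity $g^\beta(F^*)=(g^\omega(F))^*$ and the passage from ``containment holds off finitely many slices'' to the topological containment $g^\beta(F^*)\subseteq G^*$. This hinges on the two facts recorded in \S\ref{preliminaries}: that $g^\beta=\beta g^\omega\restriction Z^*$ sends the trace $F^*$ of a clopen set to the trace of its image, and that modifying a clopen set by a compact (equivalently, for $Z=\omega\times X$ with $X$ compact, a finite-slice) amount does not change its trace on $Z^*$. I would need to argue that the finitely many discarded slices genuinely contribute only a compact set — which is where compactness of $X$ enters, since each slice $\{n\}\times X$ is compact — and that no limit points in $Z^*$ escape the control exerted on the cofinitely many good slices, essentially the same normality/disjoint-closure argument used in Lemma~\ref{zaterdageen}. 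Once these are in hand, continuity at the arbitrary point $f$ follows, and since $f$ was arbitrary, $i$ is continuous.
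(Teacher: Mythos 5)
Your proposal is correct and follows essentially the same route as the paper: reduce to a subbasic neighborhood $[F^*,G^*]$, invoke Lemma~\ref{zaterdageen} to isolate the finitely many bad slices, take the countable intersection of slice conditions $[F_n,G_n]$ (the paper's $[P_n,Q_n]$) as the $G_\delta$-neighborhood, and conclude using the fact that compact modifications of clopen sets do not change their traces on $Z^*$. The only differences are cosmetic bookkeeping: you discard the bad set $B$ and use the one-sided fact that $C\setminus D$ compact implies $C^*\subseteq D^*$, while the paper takes a tail $n\ge N$ and uses the symmetric-difference version.
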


\begin{proof}
Let $F$ and $G$ be clopen in $Z$, and assume that $f^\beta\in [F^*,G^*]$.
By \ref{zaterdageen}, there exists $N< \omega$ such that for each $n \ge N$, $f^\omega(F\cap (\{n\}\times X))\subseteq G\cap (\{n\}\times X)\}$. For each $n\ge N$, take clopen subsets $P_n$ and $Q_n$ of $X$ such that $\{n\}\times P_n = F\cap (\{n\}\times X)$ and $\{n\}\times Q_n = G\cap (\{n\}\times X)$, respectively. Hence for each $n \ge N$, $f\in [P_n,Q_n]$. Put $S=\bigcap_{n\ge N}[P_n,Q_n]$. Then $S$ is a neighborhood of $f$ in $\mathcal{H}(X)_{(\delta)}$. Now assume that $g\in S$. Then
$$
    g^\omega(\textstyle \bigcup_{n\ge N} F\cap(\{n\}\times X)) = g^\omega(\textstyle \bigcup_{n\ge N} \{n\}\times P_n)\subseteq \bigcup_{n\ge N} \{n\}\times Q_n = \bigcup_{n\ge N} G\cap(\{n\}\times X),
$$
and so $g^\beta(F^*)\subseteq G^*$. To see that this is true, simply observe that $\beta g^\omega$ is a homeomorphism of $\beta X$ and that
$$
    [\textstyle \bigcup_{n\ge N} F\cap(\{n\}\times X)] \triangle F
$$
is compact, and so $(\bigcup_{n\ge N} F\cap(\{n\}\times X))^* = F^*$, and, similarly, for $G$.
\end{proof}

\begin{lemma}
$i\colon \mathcal{H}(X)\to i(\mathcal{H}(X))$ is open.
\end{lemma}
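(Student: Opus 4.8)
The plan is to reduce the statement to the subbasic open sets of $\mathcal{H}(X)$ and then to match each such set, under $i$, with an explicit relatively open subset of $i(\mathcal{H}(X))$. Recall from \S\ref{prelimeen} that, since $X$ is compact and zero-dimensional, the sets $[C,D]$ with $C$ and $D$ clopen in $X$ form a subbase for $\mathcal{H}(X)$. Since $i$ is injective by Lemma~\ref{eerstelemma}, it preserves finite intersections and arbitrary unions; consequently, once I know that $i([C,D])$ is relatively open in $i(\mathcal{H}(X))$ for all clopen $C,D\subseteq X$, the image of any finite intersection of subbasic sets, and hence of any open set, is relatively open. Thus the whole lemma collapses to a single identity on subbasic sets.

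For clopen $C,D\subseteq X$ put $\tilde C=\omega\times C$ and $\tilde D=\omega\times D$; these are clopen in $Z$, so $[\tilde C^*,\tilde D^*]\cap i(\mathcal{H}(X))$ is relatively open in $i(\mathcal{H}(X))$. The claim I would prove is
$$
    i([C,D])=[\tilde C^*,\tilde D^*]\cap i(\mathcal{H}(X)).
$$
The inclusion from left to right is the easy direction: if $f(C)\subseteq D$ then $f^\omega(\tilde C)=\omega\times f(C)\subseteq\tilde D$, and since $\beta f^\omega$ is a homeomorphism of $\beta Z$ that carries $Z^*$ onto $Z^*$ and closures onto closures, it follows that $f^\beta(\tilde C^*)\subseteq\tilde D^*$, i.e.\ $f^\beta\in[\tilde C^*,\tilde D^*]$.

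The reverse inclusion is where the content sits, and it is exactly what Lemma~\ref{zaterdageen} is built to deliver. Assume $f^\beta\in[\tilde C^*,\tilde D^*]$. By Lemma~\ref{zaterdageen} the set of those $n<\omega$ with $f^\omega(\tilde C\cap(\{n\}\times X))\not\subseteq\tilde D\cap(\{n\}\times X)$ is finite. The decisive point is that the rectangular shape of $\tilde C$ and $\tilde D$ makes this fiberwise test independent of $n$: one has $\tilde C\cap(\{n\}\times X)=\{n\}\times C$ and $f^\omega(\{n\}\times C)=\{n\}\times f(C)$, so $f^\omega(\tilde C\cap(\{n\}\times X))\not\subseteq\tilde D\cap(\{n\}\times X)$ holds if and only if $f(C)\not\subseteq D$, a condition not involving $n$ at all. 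A subset of $\omega$ that is either empty or equal to $\omega$ is finite only when it is empty, so the exceptional set must be empty; hence $f(C)\subseteq D$, that is $f\in[C,D]$.

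I expect the only real obstacle to be this $n$-independence step, and the reason to emphasize it is that it is the feature of the product $Z=\omega\times X$ that does all the work: it collapses the ``cofinitely many fibers'' conclusion of Lemma~\ref{zaterdageen} into the single global test $f(C)\subseteq D$, and thereby upgrades the containment on $Z^*$ to an exact equality of subbasic sets. Elsewhere only routine care is needed, mainly for the degenerate cases $C=\emptyset$ or $D=X$, in which $[C,D]=\mathcal{H}(X)$ and $[\tilde C^*,\tilde D^*]=\mathcal{H}(Z^*)$, so that both sides of the identity equal $i(\mathcal{H}(X))$ and the claim is immediate.
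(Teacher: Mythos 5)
Your proof is correct, and it takes a genuinely different route from the paper's, even though both run on the same engine, namely Lemma~\ref{zaterdageen} together with the rectangular structure of clopen sets of the form $\omega\times C$ in $Z$. The paper first invokes the standard fact that a homomorphism which is open at the identity is open, and then proves the single identity $i(N(\mathcal{U}))=N(\mathcal{K})\cap i(\mathcal{H}(X))$ for a finite clopen partition $\mathcal{U}$ of $X$, where $\mathcal{K}=\{(\omega\times U)^*:U\in\mathcal{U}\}$; since the partition stabilizers are defined by the equalities $f(U)=U$, the hard inclusion there requires Lemma~\ref{zaterdageen} applied to both $f$ and $f^{-1}$, followed by choosing one index $N$ that is good simultaneously for all $U\in\mathcal{U}$. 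You bypass the group structure entirely: using only injectivity (Lemma~\ref{eerstelemma}) to push images through finite intersections, you reduce openness to the subbasic sets of \S\ref{prelimeen} and establish the exact identity $i([C,D])=[\tilde{C}^*,\tilde{D}^*]\cap i(\mathcal{H}(X))$ with $\tilde{C}=\omega\times C$, $\tilde{D}=\omega\times D$, applying Lemma~\ref{zaterdageen} to $f$ alone; your key observation that the fiberwise failure condition is independent of $n$ (so that a finite exceptional set must be empty) is the clean way to exploit rectangularity, and it is the same phenomenon the paper exploits when it deduces $f(U)=U$ from $f^\omega(\{N\}\times U)=\{N\}\times U$ for a single $N$. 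What each approach buys: yours yields a purely topological statement (the image of every subbasic set is the trace of a subbasic set, no homomorphism property needed, and only a one-sided use of Lemma~\ref{zaterdageen}), while the paper's reduction to the identity means only one kind of neighborhood needs to be analyzed, the partition stabilizers, which are clopen subgroups and thus sit naturally in the non-archimedean framework used throughout the paper.
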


\begin{proof}
It suffices to show that $i$ is open at $e_{\mathcal{H}(X)}$.

Let $V\subseteq \mathcal{H}(X)$ be an open neighborhood of $e_{\mathcal{H}(X)}=\identity_X$. There is a finite clopen partition $\mathcal{U}$ of $X$ such that $N(\mathcal{U})\subseteq V$. We have to prove that $i(V)$ is a neighborhood of $i(e_{\mathcal{H}(X)})=\identity_{Z^*}$ in $i(\mathcal{H}(X))$. Put $\mathcal{K}= \{(\omega\times U)^*: U\in \mathcal{U}\}$. Then $\mathcal{K}$ is a finite clopen partition of $Z^*$. Hence $N(\mathcal{K})$ is a neigborhood of $\identity_{Z^*}$ in $\mathcal{H}(Z^*)$.

\begin{claim}
$i(N(\mathcal{U})) = N(\mathcal{K})\cap i(\mathcal{H}(X))$.
\end{claim}

Take an arbitrary $f\in N(\mathcal{U})$, and fix $U\in \mathcal{U}$. Then $f^{\omega}(\{n\}\times U) = \{n\}\times U$ for each $n < \omega$. Hence $f^\beta((\omega\times U)^*) = (\omega\times U)^*$, and so $i(f)((\omega\times U)^*) = (\omega\times U)^*$. We conclude that $i(f)\in N(\mathcal{K})$.

Conversely, assume that for some $f\in \mathcal{H}(X)$, $i(f)\in N(\mathcal{K})$. Again, fix $U\in \mathcal{U}$. Then $i(f)\in [(\omega\times U)^*, (\omega\times U)^*]$, and so by \ref{zaterdageen} (applied to both $f$ and $f^{-1}$), $\{n < \omega : f^\omega(\{n\}\times U) \not= \{n\}\times U\}$ is finite. Hence, $\mathcal{U}$ being finite, there exists $N$ such that for all $U\in \mathcal{U}$, $f^\omega(\{N\}\times U) = \{N\}\times U$, and so $f(U)=U$. We conclude that $f\in N(\mathcal{U})$, as desired.
\end{proof}

%\begin{corollary}
%$i\colon \mathcal{H}(X)_{(\delta)}\to i(\mathcal{H}(X)_{(\delta)})\subseteq \mathcal{H}(Z^*)$ is a %topological isomorphism.
%\end{corollary}

To conclude the proof of \ref{eerstestelling}, consider an arbitrary topological group $G$ of weight at most~${\mathfrak c}$. Then $G_{(\delta)}$ has weight at most ${\mathfrak c}$, and is non-archimedean, being a $P$-group. By \ref{megresh}, there is a zero-dimensional compact space $X$ such that $G_{(\delta)}$ is topologically isomorphic to a subgroup $H$ of $\mathcal{H}(X)$. Let $Z$ be as above. We know by the Lemmas that $i$ is an injective homomorphism, $i\colon \mathcal{H}(X)_{(\delta)}\to \mathcal{H}(Z^*)$ is continuous, and that $i\colon \mathcal{H}(X)\to i(\mathcal{H}(X))$ is open. Since $H$ is a $P$-space, $i{\restriction}H\colon H\to \mathcal{H}(Z^*)$ is continuous. Since $i$ is injective and open, it follows that $i{\restriction}H\colon H\to i(H)$ is open as well. We conclude that $i{\restriction}H\colon H\to i(H)$ is a topological isomorphism.

Observe that we actually proved a stronger result than stated: every $P$-group $G$ is topologically isomorphic to a subgroup of some $\mathcal{H}(X)$, where $X$ is a compact zero-dimensional $F$-space in which nonempty $G_\delta$'s have infinite interior. The assumption on the weight of $G$ in \ref{eerstestelling} was only used to conclude that the space $X$ can be chosen to be of weight ${\mathfrak c}$, making it a Parovi\v{c}enko space and hence homeomorphic to $\omega^*$ under {\ensuremath{\mathsf{CH}}}.

\section{Proof of \ref{tweedestelling}}
Since all Parovi\v{c}enko spaces are homeomorphic under {\ensuremath{\mathsf{CH}}}, one implication is trivial.

For the proof of the reverse implication, we use the Parovi\v{c}enko spaces in \cite{vm:14}. Assume that {\ensuremath{\mathsf{CH}}}\ fails. Our job is to find a contradiction. \addtocounter{claim}{-1}

\begin{claim}
There is a Parovi\v{c}enko space $S$ having the following properties:
\begin{enumerate}
\item for some $p\in S$, $\chi(p,S)=\omega_1$,
\item $S\setminus \{p\}$ is homeomorphic to an open subset of $\omega^*$.
\end{enumerate}
\end{claim}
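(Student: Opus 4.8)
The plan is to construct the Parovičenko space $S$ explicitly as a quotient of $\omega^*$ that collapses a carefully chosen nested family of clopen sets to a single point, producing a point of character exactly $\omega_1$, while keeping the complement of that point an open subspace of $\omega^*$. Since we are assuming that $\ensuremath{\mathsf{CH}}$ fails, we have $\omega_1 < {\mathfrak c}$, and this gap is precisely what lets us manufacture a point whose character is $\omega_1$ rather than ${\mathfrak c}$.

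First I would fix a strictly decreasing $\omega_1$-sequence $\langle C_\alpha : \alpha < \omega_1\rangle$ of nonempty clopen subsets of $\omega^*$ with empty intersection; such a tower exists because $\omega^*$ has an $(\omega_1,\omega_1)$-gap structure coming from a tower in $\mathcal{P}(\omega)/\mathrm{fin}$, and each $C_\alpha$ is itself homeomorphic to $\omega^*$ by the remarks in Section~\ref{preliminaries}. I would then form $S$ from $\omega^*$ by adjoining a single new point $p$ and declaring its neighborhood filter to be generated by $\{C_\alpha \cup \{p\} : \alpha < \omega_1\}$, or equivalently take the quotient that identifies the ``ends'' of the tower to $p$; the topology on $S \setminus \{p\}$ is simply the subspace topology inherited from $\omega^*$ on $\omega^* \setminus \bigcap_\alpha \mathrm{cl}(C_\alpha)$. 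By construction, property~(2) holds: $S \setminus \{p\}$ is (homeomorphic to) an open subset of $\omega^*$. For property~(1), the nested family gives $\chi(p,S) \le \omega_1$, and I would argue $\chi(p,S) \ge \omega_1$ by showing no countable subfamily of clopen sets can form a base at $p$, using that a countable decreasing intersection of the $C_\alpha$'s still has nonempty interior (this is where the $G_\delta$-property of Parovičenko spaces enters) and hence cannot shrink to $p$.

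The remaining burden is to verify that $S$ is genuinely a Parovičenko space, i.e.\ that it is compact, zero-dimensional, an $F$-space of weight ${\mathfrak c}$, and that every nonempty $G_\delta$ has infinite interior. Compactness follows since $S$ is a continuous Hausdorff image (the quotient) of the compact space obtained by suitably compactifying; zero-dimensionality and the weight bound are inherited from $\omega^*$ together with the fact that the added point has a clopen neighborhood base of size $\omega_1 \le {\mathfrak c}$. The $F$-space property and the $G_\delta$-interior condition must be checked carefully near the new point $p$: away from $p$ these are inherited from $\omega^*$, so the only subtlety is a $G_\delta$-set or a pair of cozero sets accumulating at $p$.

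The main obstacle I anticipate is precisely this local verification at $p$. I expect the delicate point to be showing that every nonempty $G_\delta$-subset of $S$ containing $p$ has infinite interior: a $G_\delta$ neighborhood of $p$ corresponds to a countable intersection $\bigcap_{n} (C_{\alpha_n} \cup \{p\})$, and I would need the tower to be chosen so that $\bigcap_n C_{\alpha_n}$ still has nonempty (hence infinite) interior in $\omega^*$. This forces a careful choice of the $C_\alpha$: rather than an arbitrary tower, I would take one whose countable sub-intersections never shrink below a fixed nonempty clopen ``floor,'' which is arrangeable because $\omega^*$ has no nonempty $G_\delta$ with empty interior. Verifying the $F$-space property at $p$ is analogous and should reduce, via the $C^*$-embedding formulation, to separating two cozero sets that both limit onto $p$, which the clopen structure of the $C_\alpha$ makes tractable.
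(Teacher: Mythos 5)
Your plan is essentially the construction that the paper merely cites from \cite{vm:14}: take an $\omega_1$-tower of clopen subsets of $\omega^*$ and collapse its ``end'' to a single point $p$, so that $S\setminus\{p\}$ is an open subset of $\omega^*$. But your write-up has one outright false step and one genuine gap, and the gap is the mathematical heart of the claim. The false step is the very first one: there is \emph{no} strictly decreasing $\omega_1$-sequence of nonempty clopen subsets of $\omega^*$ with empty intersection, since clopen subsets of the compact space $\omega^*$ are compact and a decreasing family of nonempty compact sets always has nonempty intersection. What ZFC does give (from a strictly $\subseteq^*$-decreasing sequence in $\mathcal{P}(\omega)/\mathrm{fin}$) is a strictly decreasing sequence $\langle C_\alpha : \alpha<\omega_1\rangle$ of nonempty clopen sets whose intersection $F=\bigcap_{\alpha<\omega_1}C_\alpha$ is a nonempty closed set; the space must then be $S=(\omega^*\setminus F)\cup\{p\}$, the one-point compactification of $\omega^*\setminus F$, with the sets $(C_\alpha\setminus F)\cup\{p\}$ as a neighborhood base at $p$. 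You half-correct this later by writing $\omega^*\setminus\bigcap_\alpha\mathrm{cl}(C_\alpha)$, but the requirement of empty intersection should simply be dropped. (Also, the claim itself needs no hypothesis beyond ZFC; the failure of CH matters only for how $S$ is used afterwards in the paper.)

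The serious gap is the $F$-space property of $S$, exactly the point you defer as ``tractable.'' It is not automatic: if one collapses an \emph{arbitrary} closed subset $F$ of $\omega^*$ to a point, two disjoint cozero sets, each disjoint from $F$, may accumulate on $F$ at different points of $F$, and then both of their closures in the quotient contain $p$, destroying the $F$-space property. What rescues the tower construction is a property you never isolate: $F=\bigcap_\alpha C_\alpha$ is a \emph{P-set} of $\omega^*$. Indeed, if $W_n$ $(n<\omega)$ are open sets containing $F$, then by compactness each $W_n$ contains some $C_{\alpha_n}$, so with $\beta=\sup_n\alpha_n<\omega_1$ we get $\bigcap_n W_n\supseteq C_\beta\supseteq F$. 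In particular, a cozero set $U=\{x: f(x)\neq 0\}$ of $\omega^*$ disjoint from $F$ has $\mathrm{cl}(U)\cap F=\emptyset$ (apply the above to $W_n=\{x : |f(x)|<1/n\}$), and this is precisely what forbids the bad configuration and yields that $S$ is an $F$-space. The same one-line compactness argument also gives, for free, everything you were worried about at $p$: any countable family of neighborhoods of $p$ traps a whole set $(C_\beta\setminus F)\cup\{p\}$, which proves both the infinite-interior property of $G_\delta$-sets containing $p$ and $\chi(p,S)>\omega$, hence $\chi(p,S)=\omega_1$. Consequently your proposed remedy --- a special tower whose countable subintersections stay above a fixed clopen ``floor'' --- is unnecessary, and taken literally it only says that $F$ has nonempty interior, which buys nothing: \emph{every} strictly decreasing $\omega_1$-sequence of nonempty clopen sets works, but only once the P-set property is put on the table.
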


\noindent This is the space considered in \cite[p.\ 539]{vm:14}.

\begin{claim}
There is a Parovi\v{c}enko space $T$ having the following properties:
\begin{enumerate}
\item all nonempty clopen subsets of $T$ are homeomorphic to $T$,
\item for each $x\in T$, $\chi(x,T) ={\mathfrak c}$.
\end{enumerate}
\end{claim}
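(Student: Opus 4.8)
The plan is to construct $T$ as the \v{C}ech-Stone remainder of a carefully chosen locally compact, strongly zero-dimensional space, and to force the high-character condition by making the base space suitably ``large'' at every point. First I would recall that the construction in the previous section produced Parovi\v{c}enko spaces as remainders $Z^*$ of spaces of the form $Z=\omega\times X$ with $X$ compact zero-dimensional of weight at most $\mathfrak{c}$; the remainder is automatically a Parovi\v{c}enko space by \cite[1.2.5]{vanmill:betaomega}. So the natural approach is to pick $X$ to be a compact zero-dimensional space of weight exactly $\mathfrak{c}$ in which \emph{every} point has character $\mathfrak{c}$, for instance $X=2^{\mathfrak c}$ (the Cantor cube of weight $\mathfrak{c}$), and set $Z=\omega\times X$, $T=Z^*$.

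Next I would verify property (1). As observed in the Preliminaries, because $Z$ is locally compact and strongly zero-dimensional, every nonempty relatively clopen $E\subseteq Z^*$ has the form $C^*$ for some clopen $C\subseteq Z$, and moreover $C^*=D^*$ whenever $C\triangle D$ is compact. For $X=2^{\mathfrak c}$, every nonempty clopen subset of $Z=\omega\times X$ contains a translated copy of $\omega\times X$ modulo a compact set, and homogeneity/self-similarity of the Cantor cube should give that any such $C^*$ is itself homeomorphic to $Z^*$; this is the same kind of argument that shows all nonempty clopen subsets of $\omega^*$ are homeomorphic to $\omega^*$. I expect property (1) to follow from this self-similarity together with the uniqueness-type flexibility built into $F$-space remainders.

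The heart of the matter is property (2), the statement that $\chi(x,T)=\mathfrak{c}$ for \emph{every} $x\in T$. The local base at a point $x\in Z^*$ is governed by clopen subsets of $Z$ near $x$, and character at $x$ is controlled from below by the character of points of $X$ (which is $\mathfrak{c}$ since $w(2^{\mathfrak c})=\chi(2^{\mathfrak c})=\mathfrak{c}$) and by the fact that an $F$-space remainder of weight $\mathfrak{c}$ cannot have points of countable character. Concretely, since $Z^*$ is a Parovi\v{c}enko space, every nonempty $G_\delta$ has infinite interior, so no point is a $G_\delta$; this rules out character $\omega$, and a refinement of that argument—using that a decreasing $\omega_1$-sequence of clopen sets in an $F$-space has nonempty intersection with interior—should push the lower bound up to $\mathfrak{c}$. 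For the upper bound, $w(Z^*)\le w(\beta Z)=\mathfrak{c}$, so $\chi(x,Z^*)\le\mathfrak{c}$ automatically.

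The main obstacle I anticipate is the \emph{lower} bound in (2): proving that \emph{no} point of $T$ has character strictly below $\mathfrak{c}$. Upper bounds and the clopen-homogeneity in (1) are comparatively routine, but ruling out small character uniformly at every point of a remainder is delicate, since character in $Z^*$ can in principle be smaller than in $X$ (points of the remainder are limits along $\omega$). The cleanest route is probably to exploit the $F$-space property directly: given any family of fewer than $\mathfrak{c}$ clopen neighborhoods of $x$, use a counting/separation argument to produce two disjoint cozero sets accumulating at $x$ whose closures must meet (contradicting the $F$-space property) unless the family fails to be a base. I would carry this out by transferring the known fact that $\chi(x,\omega^*)\ge\mathfrak{c}$ (or the analogous fact for such remainders) to $Z^*$ via the factor structure $Z=\omega\times X$, taking care that the large character of $X$ at each coordinate genuinely survives the passage to the remainder.
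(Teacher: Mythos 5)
You choose exactly the space the paper uses, $T=(\omega\times 2^{\mathfrak c})^*$, and your plan for property (1) is essentially the paper's argument: every nonempty clopen $C\subseteq T$ is of the form $D^*$ for some noncompact clopen $D\subseteq \omega\times 2^{\mathfrak c}$; such a $D$ meets infinitely many fibers $\{n\}\times 2^{\mathfrak c}$, each nonempty trace $D\cap(\{n\}\times 2^{\mathfrak c})$ is clopen in $2^{\mathfrak c}$ and hence homeomorphic to $2^{\mathfrak c}$, so $D\approx\omega\times 2^{\mathfrak c}$ and therefore $D^*\approx T$. (Your phrase ``contains a translated copy of $\omega\times X$ modulo a compact set'' should be replaced by the precise statement that every \emph{noncompact clopen} subset of $\omega\times 2^{\mathfrak c}$ is homeomorphic to $\omega\times 2^{\mathfrak c}$; containment alone would not give $D^*\approx T$. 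With that fix, this half is fine.)

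The genuine gap is in property (2). The paper does not reprove it -- it cites \cite{vm:14} -- but both routes you sketch for the lower bound $\chi(x,T)\ge\mathfrak c$ would fail. First, no argument using only the Parovi\v{c}enko/$F$-space properties of $T$ can possibly work: the space $S$ of the \emph{preceding} Claim in this very proof is a Parovi\v{c}enko space with a point of character $\omega_1$, and the Claim is invoked precisely under the assumption that $\mathsf{CH}$ fails, so $\omega_1<\mathfrak c$ there. Thus ``Parovi\v{c}enko plus a refinement'' provably cannot force character $\mathfrak c$ at every point; relatedly, your auxiliary assertion that a decreasing $\omega_1$-sequence of clopen sets in such a space has interior in its intersection is consistently false (a tower of height $\omega_1$ in $\omega^*$, available whenever $\mathfrak t=\omega_1<\mathfrak c$, gives a decreasing clopen $\omega_1$-sequence whose intersection has empty interior). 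Second, your fallback of ``transferring the known fact that $\chi(x,\omega^*)\ge\mathfrak c$'' rests on a statement that is not a theorem of $\mathsf{ZFC}$: as this paper itself remarks, the minimum character of points of $\omega^*$ is undecidable in $\mathsf{ZFC}$, and again the context is a model of $\neg\mathsf{CH}$. What a correct proof of (2) actually needs is the coordinate structure of the Cantor cube: every clopen subset of $\omega\times 2^{\mathfrak c}$ depends on only countably many coordinates, so a family of $\kappa<\mathfrak c$ clopen neighborhoods of $x$ depends on fewer than $\mathfrak c$ coordinates; picking an untouched coordinate $\gamma$ and the set $W=\omega\times\{y : y(\gamma)=i\}$ with $x\in W^*$, no member of the family is contained in $W^*$ (flipping coordinate $\gamma$ shows each candidate $D_\alpha$ has noncompact part outside $W$), so the family is not a local base. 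That argument, or the citation replacing it, is what is missing from your proposal.
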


\noindent Indeed, $T= (\omega\times 2^{\mathfrak c})^*$, the space considered in \cite[p.\ 540]{vm:14}. We only need to check (1) since (2) can be found in \cite[p.\ 540]{vm:14}. If $C\subseteq T$ is nonempty and clopen, then there is a noncompact clopen $D\subseteq \omega\times 2^{\mathfrak c}$ such that $\mathrm{cl}_{\beta T} D = D\cup C$, and so $D^* = C$. There are infinitely many $n$ such that $(\{n\}\times 2^{\mathfrak c})\cap D\not=\emptyset$. Since all nonempty clopen subsets of $2^{\mathfrak c}$ are homeomorphic to $2^{\mathfrak c}$, $D\approx \omega\times 2^{\mathfrak c}$, hence $D^* \approx T$.

\par\smallskip\noindent

Assume that $\mathcal{H}(T)$, $\mathcal{H}(S)$ and $\mathcal{H}(\omega^*)$ are pairwise isomorphic.

Since all nonempty clopen subsets of $\omega^*$ are homeomorphic to $\omega^*$, $T\approx \omega^*$ by \ref{rubin}. Hence for each $q\in \omega^*$, $\chi(q,\omega^*)={\mathfrak c}$.

Now consider $S$, and its point $p$. Since $S\setminus \{p\}$ is homeomorphic to an open subset of $\omega^*$, it follows from what we know that for each $r\in S\setminus \{p\}$, $\chi(r,S) = {\mathfrak c}$. Since $\chi(p,S)=\omega_1<{\mathfrak c}$, this implies that for each $f\in \mathcal{H}(S)$, $f(p)=p$. But then since $S$ is the Alexandroff one-point compactification of $S\setminus \{p\}$, $\mathcal{H}(S\setminus \{p\})$ and $\mathcal{H}(S)$ are isomorphic. It is also clear that $S\setminus \{p\}$, being homeomorphic to an open subset of $\omega^*$, has a base of homeomorphic copies of $\omega^*$. Hence both
$\langle{S\setminus \{p\}},{\mathcal{H}(S\setminus\{p\}}\rangle$ and $\langle{\omega^*},{\mathcal{H}(\omega^*)}\rangle$ are in $\mathfrak{R}$, while moreover $\mathcal{H}(S\setminus\{p\})$ and $\mathcal{H}(\omega^*)$ are isomorphic. So $S\setminus\{p\}$ and $\omega^*$ are homeomorphic by \ref{rubin}; this is a contradiction since one of them is compact, while the other one is not.

\section{Some topological properties of{\ensuremath{\mathcal{H}(\omega^*)}}}

We study the topological group {\ensuremath{\mathcal{H}(\omega^*)}}\ here.

\subsection{Cardinal functions}

We discuss the values of a few familiar cardinal functions on {\ensuremath{\mathcal{H}(\omega^*)}}; as an application, it will follow that {\ensuremath{\mathcal{H}(\omega^*)}}\ is not a $P$-group.

\begin{lemma}\label{hetlemma}
Let $B\subseteq \omega^*$ be clopen. and let $\mathcal{Z}$ be a finite clopen partition of $\omega^*$ such that $\bigcap_{Z\in \mathcal{Z}} [Z,Z] \subseteq [B,B]$. Then for each $Z\in \mathcal{Z}$, either $B\cap Z=\emptyset$ or $Z\subseteq B$.
\end{lemma}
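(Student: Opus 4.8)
The plan is to argue by contraposition, after first unwinding the hypothesis. Since $\mathcal{Z}$ is a finite clopen \emph{partition}, the assumption $\bigcap_{Z\in\mathcal{Z}}[Z,Z]\subseteq[B,B]$ is exactly the statement $N(\mathcal{Z})\subseteq[B,B]$: any $f$ with $f(Z)\subseteq Z$ for every $Z\in\mathcal{Z}$ must, being a bijection permuting a finite partition, satisfy $f(Z)=Z$ for each $Z$. So the hypothesis says precisely that every homeomorphism of $\omega^*$ that fixes each member of $\mathcal{Z}$ setwise maps $B$ into $B$.

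Suppose, for a contradiction, that the conclusion fails for some $Z_0\in\mathcal{Z}$, so that both $B\cap Z_0$ and $Z_0\setminus B$ are nonempty. As $B$ and $Z_0$ are clopen, these two sets are nonempty clopen subsets of $\omega^*$, hence each is homeomorphic to $\omega^*$ by the remark in \S\ref{preliminaries}; in particular $B\cap Z_0\approx Z_0\setminus B$. I would then fix a homeomorphism $\xi\colon B\cap Z_0\to Z_0\setminus B$ and define $f\colon\omega^*\to\omega^*$ to equal $\xi$ on $B\cap Z_0$, $\xi^{-1}$ on $Z_0\setminus B$, and $\identity$ on $\omega^*\setminus Z_0$. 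This is a homeomorphism of $\omega^*$ (indeed an involution) by the same clopen-gluing argument used to verify Rubin's condition before Theorem~\ref{rubin}, since its three pieces are defined on, and take values in, members of the clopen partition $\{B\cap Z_0,\,Z_0\setminus B,\,\omega^*\setminus Z_0\}$ of $\omega^*$.

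The decisive point is that $f\in N(\mathcal{Z})$ while $f\notin[B,B]$. It maps $Z_0$ onto $Z_0$ by interchanging its two clopen halves, and is the identity on every other member of $\mathcal{Z}$, so $f(Z)=Z$ for all $Z\in\mathcal{Z}$ and thus $f\in N(\mathcal{Z})$. On the other hand, choosing any $x\in B\cap Z_0\subseteq B$ gives $f(x)=\xi(x)\in Z_0\setminus B$, whence $f(x)\notin B$ and $f(B)\not\subseteq B$. This contradicts $N(\mathcal{Z})\subseteq[B,B]$, and the lemma follows.

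I do not expect a genuine obstacle here; the only things to be careful about are the identification of $\bigcap_{Z}[Z,Z]$ with $N(\mathcal{Z})$ in the first paragraph and the verification that the glued map is a homeomorphism lying in $N(\mathcal{Z})$. Both are routine once one uses the key structural fact that every nonempty clopen subset of $\omega^*$ is a copy of $\omega^*$, so that $B\cap Z_0$ and $Z_0\setminus B$ can be swapped inside $Z_0$ without disturbing any other piece of the partition.
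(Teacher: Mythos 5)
Your proposal is correct and is essentially identical to the paper's own proof: the paper also takes a homeomorphism $g\colon B\cap Z\to Z\setminus B$ (which exists because nonempty clopen subsets of $\omega^*$ are copies of $\omega^*$), glues it with its inverse and the identity on $\omega^*\setminus Z$, and observes that the resulting involution lies in $\bigcap_{Z\in\mathcal{Z}}[Z,Z]$ but not in $[B,B]$. Your extra remarks (the identification of $\bigcap_{Z\in\mathcal{Z}}[Z,Z]$ with $N(\mathcal{Z})$ and the clopen-gluing verification) are routine details the paper leaves implicit.
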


\begin{proof}
Let $Z\in \mathcal{Z}$ be arbitrary, and assume that $B\cap Z\not =\emptyset$ and $Z\setminus B\not=\emptyset$.
Let $g \colon B\cap Z\to Z\setminus B$ be any homeomorphism, and define $f\in \mathcal{H}(\omega^*)$ by: $f$ restricts to the identity on $\omega^*\setminus Z$, restricts to $g$ on $B\cap Z$, and $g^{-1}$ on $Z\setminus B$.

Then $f\in \bigcap_{Z\in \mathcal{Z}} [Z,Z] \setminus [B,B]$, which is a contradiction.
\end{proof}

An
indexed family $\{(A_i^0, A_i^1): i \in I\}$ of pairs of disjoint subsets of a set $X$ is called
\emph{independent} provided that for all $\sigma\in [I]^{<\omega}$ and $\xi\in 2^\sigma$, the set $\bigcap_{i\in\sigma} A^{\xi(i)}_i$ is infinite.

\begin{lemma}\label{independent}
There is an independent family $\{(A_\alpha^0, A_\alpha^1): \alpha < {\mathfrak c}\}$ of pairs of disjoint subsets of $\mathcal{H}(\omega^*)$ such that
\begin{enumerate}
\item for each $\alpha < {\mathfrak c}$, $A_\alpha^0$ is a clopen subgroup of $\mathcal{H}(\omega^*)$, and $A_\alpha^1$ is its complement,
\item for each infinite $\sigma\subseteq {\mathfrak c}$, $\bigcap_{\alpha\in \sigma} A_\alpha^0$ is nowhere dense in $\mathcal{H}(\omega^*)$.
\end{enumerate}
\end{lemma}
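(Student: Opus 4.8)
The plan is to realize each $A_\alpha^0$ as a clopen subgroup of the form $\hat{C_\alpha}=\{f\in\mathcal{H}(\omega^*):f(C_\alpha)=C_\alpha\}$, where $\{C_\alpha:\alpha<{\mathfrak c}\}$ is an \emph{independent} family of nonempty clopen subsets of $\omega^*$, meaning that every finite Boolean combination $\bigcap_{\alpha\in\sigma_0}C_\alpha\cap\bigcap_{\alpha\in\sigma_1}(\omega^*\setminus C_\alpha)$, with $\sigma_0,\sigma_1$ finite and disjoint, is nonempty. Such a family exists: starting from a classical independent family $\{B_\alpha:\alpha<{\mathfrak c}\}$ of subsets of $\omega$ (all of whose finite Boolean combinations are infinite), the clopen sets $C_\alpha=B_\alpha^*$ are independent in $\omega^*$, since a finite Boolean combination of the $C_\alpha$ is the trace on $\omega^*$ of the corresponding combination of the $B_\alpha$, and the former is nonempty precisely because the latter is infinite. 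Each $\hat{C_\alpha}$ is a clopen subgroup by the discussion in \S\ref{prelimeen}, and I set $A_\alpha^1=\mathcal{H}(\omega^*)\setminus A_\alpha^0$, so the pairs are automatically disjoint and satisfy (1).

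For independence of the pairs, fix a finite $\sigma\subseteq{\mathfrak c}$ and $\xi\in 2^\sigma$, and write $S_j=\{\alpha\in\sigma:\xi(\alpha)=j\}$. By independence of $\{C_\alpha\}$ the $2^{|\sigma|}$ cells $D_\epsilon=\bigcap_{\alpha\in\sigma}C_\alpha^{\epsilon(\alpha)}$ ($\epsilon\in 2^\sigma$) are nonempty, pairwise disjoint, and partition $\omega^*$ into clopen pieces, each homeomorphic to $\omega^*$. I then let $\pi\colon 2^\sigma\to 2^\sigma$ be the involution flipping exactly the coordinates in $S_1$, choose homeomorphisms $h_\epsilon\colon D_\epsilon\to D_{\pi\epsilon}$ with $h_{\pi\epsilon}=h_\epsilon^{-1}$, and glue them to a single $f\in\mathcal{H}(\omega^*)$. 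Since $\pi$ preserves every coordinate in $S_0$ and flips every coordinate in $S_1$, one computes $f(C_\alpha)=C_\alpha$ for $\alpha\in S_0$ and $f(C_\alpha)=\omega^*\setminus C_\alpha\ne C_\alpha$ for $\alpha\in S_1$; thus $f\in\bigcap_{\alpha\in\sigma}A_\alpha^{\xi(\alpha)}$. To get infinitely many such elements I compose $f$ with the members of $N(\mathcal D)$, where $\mathcal D=\{D_\epsilon:\epsilon\in 2^\sigma\}$: every $g\in N(\mathcal D)$ fixes each cell, hence each $C_\alpha$, so $f\circ g$ has the same preserving/moving pattern as $f$, and $N(\mathcal D)$ is infinite.

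The heart of the matter is condition (2), and this is where I expect the real work to lie. Since $\bigcap_{\alpha\in\sigma}\hat{C_\alpha}$ is an intersection of clopen subgroups it is a closed subgroup, hence nowhere dense as soon as it is not open; and a subgroup is open exactly when it contains some basic neighborhood $N(\mathcal Z)$ of $e$, with $\mathcal Z$ a finite clopen partition of $\omega^*$. By Lemma~\ref{hetlemma}, $N(\mathcal Z)\subseteq\hat{C_\alpha}$ forces $C_\alpha$ to be a union of members of $\mathcal Z$, so $\bigcap_{\alpha\in\sigma}\hat{C_\alpha}$ is open iff some single finite partition $\mathcal Z$ renders every $C_\alpha$ ($\alpha\in\sigma$) a union of its pieces. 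The decisive step is then a counting argument tying this back to independence: if $C_{\alpha_1},\dots,C_{\alpha_k}$ are independent and each is a union of $\mathcal Z$-pieces, the $2^k$ nonempty cells they determine are pairwise disjoint unions of $\mathcal Z$-pieces, forcing $2^k\le|\mathcal Z|$. Hence no fixed finite partition can measure more than $\log_2|\mathcal Z|$ of the $C_\alpha$, so for infinite $\sigma$ no finite $\mathcal Z$ works, $\bigcap_{\alpha\in\sigma}\hat{C_\alpha}$ is not open, and it is therefore nowhere dense. I anticipate this final counting step—converting set-theoretic independence into the impossibility of openness—to be the crux, with the rest amounting to bookkeeping through Lemma~\ref{hetlemma} and the translation between \emph{nowhere dense}, \emph{not open}, and \emph{contains no $N(\mathcal Z)$}.
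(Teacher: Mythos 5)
Your proof is correct, but its combinatorial core differs from the paper's in an instructive way. The paper does \emph{not} use a Boolean-independent family of clopen sets: it takes a pairwise \emph{disjoint} family $\{B_\alpha:\alpha<{\mathfrak c}\}$ of nonempty clopen sets together with a second disjoint family $\{C_\alpha\}$ (disjoint from all the $B_\alpha$'s), and puts $A_\alpha^0=\hat B_\alpha$. Independence of the pairs then comes from a very simple swap: for $\xi\in 2^\sigma$ the homeomorphism exchanging $B_\alpha\leftrightarrow C_\alpha$ exactly for those $\alpha$ with $\xi(\alpha)=1$ (and the identity elsewhere) lands in $\bigcap_{\alpha\in\sigma}A_\alpha^{\xi(\alpha)}$; and nowhere density of $\bigcap_{\alpha\in\sigma}\hat B_\alpha$ for infinite $\sigma$ follows from a one-line pigeonhole via Lemma~\ref{hetlemma}: some piece $Z$ of a putative finite partition $\mathcal Z$ would have to be contained in two distinct (disjoint!) sets $B_\delta, B_\gamma$. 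Your choice of a genuinely independent family $\{C_\alpha\}$ shifts the work around: the verification of condition (1) becomes harder (you need the cell decomposition $\{D_\epsilon\}$, all cells nonempty and homeomorphic to $\omega^*$, and the coordinate-flipping involution $\pi$, all of which you handle correctly), while condition (2) is obtained by the exponential counting bound $2^k\le|\mathcal Z|$ instead of the disjointness pigeonhole. Both arguments share the same skeleton -- subgroups of the form $\hat C$, Lemma~\ref{hetlemma}, and the observation that a closed subgroup with nonempty interior is open -- so the comparison mainly shows that set-theoretic independence of the underlying clopen sets is not needed: mere pairwise disjointness already yields independence of the pairs $(\hat B_\alpha,\,\mathcal{H}(\omega^*)\setminus\hat B_\alpha)$, because ``moving'' $B_\alpha$ can be arranged with a target $C_\alpha$ living outside all the other sets. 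One small bonus of your version: it exhibits the independent family inside the canonical algebra generated by sets of the form $A^*$, $A\subseteq\omega$, which some readers may find more concrete.
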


\begin{proof}
Let $\mathcal{B}$ be a pairwise disjoint family nonempty clopen subsets of $\omega^*$, \cite[3.1.2(a)]{vanmill:betaomega} of size ${\mathfrak c}$, and enumerate it `faithfully' as $\{B_\alpha : \alpha < {\mathfrak c}\}$. We can split this family in two subfamilies, each of cardinality ${\mathfrak c}$. Hence we may assume that there is also a similarly indexed pairwise disjoint family nonempty clopen subsets $\{C_\alpha : \alpha < {\mathfrak c}\}$ of $\omega^*$, such that $\bigcup_{\alpha<{\mathfrak c}}B_\alpha\cap \bigcup_{\alpha < {\mathfrak c}} C_\alpha=\emptyset$.

For each $\alpha < {\mathfrak c}$, put $A_\alpha^0 = \hat B_\alpha^0$ (see \S\ref{prelimeen}).

For (1), pick arbitrary $\sigma\in [{\mathfrak c}]^{<\omega}$ and $\xi\in 2^\sigma$. Let $\tau = \{\alpha\in\sigma : \xi(\alpha) = 1\}$. Define an element $f\in {\ensuremath{\mathcal{H}(\omega^*)}}$, as follows. If $\alpha\in \tau$, then $f(B_\alpha) = C_\alpha$ and $f(C_\alpha)=B_\alpha$. Moreover, $f$ restricts to the identity on $\omega^*\setminus (\bigcup_{\alpha\in\tau}B_\alpha\cup C_\alpha)$.
Then $f\in \bigcap_{\alpha\in \sigma} A_\alpha^{\xi(\alpha)}$. It is easily seen that the intersection is in fact infinite.

For (2), let $\sigma\subseteq {\mathfrak c}$ be infinite. If $\bigcap_{\alpha\in \sigma} A^0_\alpha$ has nonempty interior, then it contains a neighborhood of the identity, being a subgroup. Hence it suffices to prove that this cannot happen. To this end, let $\mathcal{Z}$ be a finite clopen partition of $\omega^*$ by nonempty sets such that $\bigcap_{Z\in \mathcal{Z}}[Z,Z]$ is contained in $\bigcap_{\alpha\in \sigma} A^0_\alpha$. By \ref{hetlemma}, for each $\alpha\in\sigma$ and $Z\in \mathcal{Z}$, either $B_\alpha\cap Z=\emptyset$, or $Z\subseteq B_\alpha$. There exists $Z\in \mathcal{Z}$ such that $\{\alpha\in\sigma : B_\alpha\cap Z\not=\emptyset\}$ is infinite. Hence for distinct such $\delta$ and $\gamma$, $Z\subseteq B_\delta\cap B_\gamma$, which is a contradiction.
\end{proof}

\begin{corollary}\label{NotP}
$\mathcal{H}(\omega^*)$ is not a $P$-group.
\end{corollary}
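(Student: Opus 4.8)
The plan is to read the corollary off directly from Lemma~\ref{independent}, since all of the combinatorial work has already been carried out there. Recall that a $P$-group is by definition a topological group whose underlying space is a $P$-space, i.e.\ a space in which every $G_\delta$-subset is open. So it suffices to exhibit a single $G_\delta$-subset of $\mathcal{H}(\omega^*)$ that fails to be open.

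First I would fix any countably infinite subset $\sigma\subseteq {\mathfrak c}$ and consider the set $W=\bigcap_{\alpha\in\sigma}A_\alpha^0$. By Lemma~\ref{independent}(1), each $A_\alpha^0$ is a clopen subgroup of $\mathcal{H}(\omega^*)$, hence in particular open; since $\sigma$ is countable, $W$ is a countable intersection of open sets, that is, a $G_\delta$-subset of $\mathcal{H}(\omega^*)$.

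Next I would observe that $W$ is nonempty but has empty interior. It is nonempty because each $A_\alpha^0$, being a subgroup, contains the identity $e$, so $e\in W$. On the other hand, since $\sigma$ is infinite, Lemma~\ref{independent}(2) tells us that $W$ is nowhere dense in $\mathcal{H}(\omega^*)$, and in particular has empty interior. A nonempty set with empty interior cannot be open.

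Therefore $W$ is a $G_\delta$-subset of $\mathcal{H}(\omega^*)$ that is not open, so $\mathcal{H}(\omega^*)$ is not a $P$-space and hence not a $P$-group. There is no genuine obstacle here: all of the difficulty was front-loaded into the construction of the independent family, whose nowhere-density clause~(2) is precisely tailored to defeat the $P$-space property on countable intersections, while clause~(1) guarantees that those intersections are both $G_\delta$ and nonempty. The corollary is just the act of reading off this consequence.
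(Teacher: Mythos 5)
Your proof is correct and is exactly the argument the paper intends: the corollary is stated without proof immediately after Lemma~\ref{independent} precisely because a countably infinite $\sigma$ yields a nonempty $G_\delta$ (containing $e$) that is nowhere dense by clause~(2), hence not open. Nothing is missing.
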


We mentioned in \S\ref{introduction} that the cardinality of ${\ensuremath{\mathcal{H}(\omega^*)}}$ cannot be determined in {\ensuremath{\mathsf{ZFC}}. We see the same phenomenon in $\omega^*$. For example, both the minimum character and the minimum $\pi$-character of points of $\omega^*$ cannot be determined in {\ensuremath{\mathsf{ZFC}}, see \cite{vanmill:betaomega}; but of course, its cardinality can, it is $2^{\mathfrak c}$. Interestingly, {\ensuremath{\mathcal{H}(\omega^*)}}\ behaves similarly but differently.

\begin{proposition}\label{cardinal}
If $\phi\in \{w, \chi, \pi\chi, c,d,t\}$, then for each nonempty open $U$ in ${\ensuremath{\mathcal{H}(\omega^*)}}$, $\phi(U)={\mathfrak c}$.
\end{proposition}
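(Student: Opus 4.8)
The plan is to establish the statement $\phi(U) = {\mathfrak c}$ for each of the six cardinal functions by proving two inequalities: an upper bound $\phi(U) \le {\mathfrak c}$ that holds for all of them simultaneously, and a lower bound $\phi(U) \ge {\mathfrak c}$ that will require function-specific arguments. For the upper bound, I would first observe that $w(\mathcal{H}(\omega^*)) \le w(\omega^*) = {\mathfrak c}$ by the remark in \S\ref{prelimeen}. Since weight dominates character, $\pi$-character, cellularity, density, and tightness, and since every nonempty open $U$ inherits weight at most that of the whole space, this gives $\phi(U) \le {\mathfrak c}$ for all $\phi$ in the list at once. (One should note here that $\mathcal{H}(\omega^*)$ is homogeneous as a topological group, so it suffices throughout to treat neighborhoods of the identity $e$; any nonempty open $U$ contains a translate of such a neighborhood, and all these cardinal functions are invariant under homeomorphism and monotone or suitably behaved with respect to passing to nonempty open subsets.)

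For the lower bound, the key tool is Lemma~\ref{independent}, which produces an independent family $\{(A_\alpha^0, A_\alpha^1) : \alpha < {\mathfrak c}\}$ where each $A_\alpha^0$ is a clopen subgroup and each infinite intersection $\bigcap_{\alpha \in \sigma} A_\alpha^0$ is nowhere dense. I would use this family as the engine for the character and cellularity bounds, from which the remaining ones follow. For cellularity $c$: the independence condition guarantees that for each finite $\sigma$ and each $\xi \in 2^\sigma$ the set $\bigcap_{\alpha \in \sigma} A_\alpha^{\xi(\alpha)}$ is nonempty (indeed infinite), and since the $A_\alpha^0$ are clopen subgroups, one can extract ${\mathfrak c}$ many pairwise disjoint nonempty open sets inside any neighborhood of $e$, giving $c(U) \ge {\mathfrak c}$. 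For character $\chi$: if $e$ had a local base of size $<{\mathfrak c}$, then by non-archimedeanity this base could be refined to clopen subgroups; I would argue that such a small family of clopen subgroups cannot separate the identity from the nowhere-dense infinite intersections supplied by the family, contradicting part~(2) of Lemma~\ref{independent}. This yields $\chi(e, \mathcal{H}(\omega^*)) \ge {\mathfrak c}$, hence $\chi(U) = {\mathfrak c}$.

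Once $\chi = {\mathfrak c}$ and $c = {\mathfrak c}$ are established, the other three functions follow by standard inequalities among cardinal functions: $\pi\chi \ge c$ fails in general but $\pi\chi(e) \ge \chi(e)$ is false in general too, so more care is needed — instead I would derive $\pi\chi \ge {\mathfrak c}$ directly from the independent family (a $\pi$-base at $e$ of size $<{\mathfrak c}$ would again be incompatible with ${\mathfrak c}$ many nowhere-dense infinite intersections), and then use $w \ge d \ge c$ and $w \ge t$ together with the matching upper bounds. For density $d$ and tightness $t$, the cleanest route is: $d(U) \ge c(U) = {\mathfrak c}$ gives density, and for tightness I would exhibit a point in the closure of a set that cannot be reached by any subset of size $<{\mathfrak c}$, again leveraging that countable (or small) subsets behave rigidly — here I can invoke the paper's own stated fact that all countable subsets of $\mathcal{H}(\omega^*)$ are closed and discrete, which forces tightness to be uncountable and, combined with the independent family, pushed up to ${\mathfrak c}$.

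The main obstacle I anticipate is the lower bound for tightness $t$ and for $\pi$-character $\pi\chi$, since these are not immediately forced by cellularity alone and require building an explicit witnessing configuration from the independent family rather than quoting a textbook inequality. Concretely, the delicate point is to show that an accumulation phenomenon (for tightness) or a $\pi$-base (for $\pi\chi$) of size $<{\mathfrak c}$ would contradict property~(2) of Lemma~\ref{independent}; the nowhere-density of infinite intersections is exactly what rules out small $\pi$-bases, but one must verify that the clopen-subgroup structure lets every basic open set around $e$ absorb one of the $A_\alpha^0$, so that a small $\pi$-base would determine only countably many coordinates $\alpha$ and thus fail to be $\pi$-dense. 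I expect the character and cellularity arguments to be routine given Lemmas~\ref{hetlemma} and~\ref{independent}, with the genuine work concentrated in handling $\pi\chi$ and $t$.
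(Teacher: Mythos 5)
Your upper bound and your character and $\pi$-character arguments are sound and essentially the paper's own: each $A_\alpha^0$ from Lemma~\ref{independent} is a clopen neighborhood of $e$, so a local base (or local $\pi$-base) of size $\kappa<{\mathfrak c}$ would, by a pigeonhole over the ${\mathfrak c}$ many indices $\alpha$, have a single member contained in $\bigcap_{\alpha\in E}A_\alpha^0$ for some infinite $E$, contradicting nowhere density; you should state this pigeonhole explicitly, since ``cannot separate the identity from the intersections'' is not quite the mechanism (the intersections all \emph{contain} $e$). But two of your lower bounds have genuine gaps. First, cellularity: you claim that from the independent family ``one can extract ${\mathfrak c}$ many pairwise disjoint nonempty open sets.'' This is false as stated: independence says precisely that \emph{every} finite Boolean combination of the sets $A_\alpha^0$, $A_\alpha^1$ is nonempty, so the subalgebra they generate is a free Boolean algebra, whose Stone space is the Cantor cube $2^{\mathfrak c}$ --- a ccc space. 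Hence no uncountable disjoint family can be built from Boolean combinations of the $A_\alpha^i$; independence works \emph{against} disjointness, not for it. The paper's cellular family is a separate construction (in effect, distinct cosets of one clopen subgroup, localized inside $U$): take a finite clopen partition $\mathcal Z$ with $N(\mathcal Z)\subseteq U$, a nonempty $Z\in\mathcal Z$, a pairwise disjoint family $\mathcal B$ of ${\mathfrak c}$ nonempty clopen subsets of $Z$ and one further nonempty clopen $C\subseteq Z$ disjoint from $\bigcup\mathcal B$, and put $U_B=\{f\in N(\mathcal Z): f(B)=C,\dots\}$; these sets are pairwise disjoint because an injective $f$ cannot map two disjoint sets $B\ne B'$ both onto $C$.

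Second, and more seriously, tightness. Your route --- countable subsets of ${\ensuremath{\mathcal{H}(\omega^*)}}$ are closed and discrete (Proposition~\ref{ditisem}), hence $t>\omega$, then ``combined with the independent family, pushed up to ${\mathfrak c}$'' --- only yields $t\ge\omega_1$, which equals ${\mathfrak c}$ only under {\ensuremath{\mathsf{CH}}}, whereas the proposition is a {\ensuremath{\mathsf{ZFC}}} statement. The independent family cannot close this gap: as noted, its trace algebra is free (ccc), and dense subgroups of $2^{\mathfrak c}$ can have \emph{countable} tightness, so no transfer through the clopen subgroups $A_\alpha^0$ will force large tightness. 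The paper's tightness proof requires a genuinely new external ingredient, namely the Kunen-matrix result \cite[3.3.4]{vanmill:betaomega}: there are an open $F_\sigma$-set $V\subseteq\omega^*$ and a point $p\in\overline{V}\setminus V$ such that $p\notin\overline{F}$ for every $F\in[V]^{<{\mathfrak c}}$. One then enumerates all finite clopen partitions $\mathcal Z_\alpha$ ($\alpha<{\mathfrak c}$), and for each $\alpha$ chooses $f_\alpha\in N(\mathcal Z_\alpha)$ which is the identity outside the member $Z_\alpha\ni p$ and satisfies $f_\alpha(p)\in V$. Then $e\in\overline{\{f_\alpha:\alpha<{\mathfrak c}\}}$, but if $e$ were in the closure of $\{f_\alpha:\alpha\in E\}$ with $|E|<{\mathfrak c}$, then testing with the clopen subgroups $\{g: g(W)=W\}$ for clopen $W\ni p$ shows $p\in\overline{\{f_\alpha(p):\alpha\in E\}}$, a closure of a subset of $V$ of size $<{\mathfrak c}$ --- contradiction. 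Without this (or an equivalent) input, the bound $t(U)\ge{\mathfrak c}$ does not follow from anything else in your outline, and you correctly sensed that this is where the real difficulty lies.
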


\begin{proof}
Let $U\in \tau({\ensuremath{\mathcal{H}(\omega^*)}})\setminus\{\emptyset\}$. We may assume without loss of generality that $e\in U$.

 That $w({\ensuremath{\mathcal{H}(\omega^*)}}) \le{\mathfrak c}$ is clear, see \S\ref{prelimeen}, hence $w(U)\le {\mathfrak c}$.

Assume that $\{U_\gamma : \gamma < \kappa\}$, where $\kappa < {\mathfrak c}$, is a local base at $e$ in {\ensuremath{\mathcal{H}(\omega^*)}}.
Let $\{(A_\alpha^0, A_\alpha^1): \alpha < {\mathfrak c}\}$ be as in \ref{independent}. For each $\alpha <{\mathfrak c}$, there exists $\gamma(\alpha) < \kappa$ such that $U_{\gamma(\alpha)}\subseteq A^0_\alpha$. There is $\delta < \kappa$ such that the set $E=\{\alpha < {\mathfrak c} : \gamma(\alpha)=\delta\}$ is infinite. Hence $U_\delta\subseteq \bigcap_{\alpha \in E} A^0_\alpha$, which contradicts \ref{independent}(2). Hence $\chi(e,{\ensuremath{\mathcal{H}(\omega^*)}})={\mathfrak c}$, and so $\chi(U)={\mathfrak c}$.

By the same argument, $\pi\chi(U)={\mathfrak c}$ as well.

Let $\mathcal{Z}$ be a finite clopen partition of $\omega^*$ such that $N(\mathcal{Z})\subseteq U$. Pick a nonempty $Z\in \mathcal{Z}$.
Let $\mathcal{B}$ be a pairwise disjoint family nonempty clopen subsets of $Z$ of size ${\mathfrak c}$, \cite[3.1.2(a)]{vanmill:betaomega}. We may assume that there exists a nonempty clopen subset $C$ of $Z$ such that $C\cap \bigcup \mathcal{B}=\emptyset$. For each $B\in \mathcal{B}$, let $U_B = \{f\in \mathcal{H}(\omega^*) : (f(B)= C) \, \& \, (f(Z\setminus (B\cup C))=Z\setminus (B\cup C)\, \& \, (f{\restriction} (\omega^*\setminus Z)$ is the identity on $\omega^*\setminus Z)\}$.
It is easy to see that $\{U_B: B\in \mathcal{B}\}$ is pairwise disjoint clopen family of subsets of $N(\mathcal{Z})$ of cardinality ${\mathfrak c}$.

 Hence ${\mathfrak c} \le c(U)\le d(U) \le w(U)\le{\mathfrak c}$, and so we get $c=d=w={\mathfrak c}$.

Now we deal with the tightness of {\ensuremath{\mathcal{H}(\omega^*)}}. This is more tricky. By \cite[3.3.4]{vanmill:betaomega}, there are an open $F_\sigma$-subset $V$ of $\omega^*$ and a point $p\in \overline{V}\setminus V$ such that for every $F\in [V]^{<{\mathfrak c}}$, $p\not\in \overline{F}$ (the proof of this result is based on a highly nontrivial matrix of clopen subsets of $\omega^*$ due to Kunen~\cite{Kunen78}). Let $\{\mathcal{Z}_\alpha : \alpha < {\mathfrak c}\}$ enumerate all finite clopen partitions of $\omega^*$. For every $\alpha < {\mathfrak c}$, there is a unique $Z_\alpha\in \mathcal{Z}_\alpha$ that contains~$p$ and hence intersects $V$.
Let $f_\alpha\in{\ensuremath{\mathcal{H}(\omega^*)}}$ be such that $f_\alpha(Z_\alpha)=Z_\alpha$, $f_\alpha(p)\in V$ (use that nonempty
clopen subsets of $\omega^*$ are homeomorphic to $\omega^*$), and $f_\alpha$ restricts to the identity on $\omega^*\setminus Z_\alpha$. Then $f_\alpha\in N(\mathcal{Z}_\alpha)\setminus \{e\}$. Hence $e\in \overline{\{f_\alpha : \alpha < {\mathfrak c}\}}$. Let $E\in [{\mathfrak c}]^{<{\mathfrak c}}$, and assume that $e\in \overline{\{f_\alpha : \alpha \in E\}}$.

\begin{claim}
$p\in \overline{\{f_\alpha(p): \alpha\in E\}}$.
\end{claim}

Indeed, let $W$ be an arbitrary clopen neighborhood of $p$, and put $\tilde W = \{g\in {\ensuremath{\mathcal{H}(\omega^*)}}: g(W)=W\}$. Then $\tilde W$ is a clopen neighborhood of $e$ in {\ensuremath{\mathcal{H}(\omega^*)}}, hence there exists $\alpha\in E$ such that $f_\alpha\in \tilde W$, and so $f_\alpha(p)\in W$.

 \par\medskip\noindent

Hence we arrived at a contradiction since $\{f_\alpha(p): \alpha\in E\}\in [V]^{<{\mathfrak c}}$ and so, by the properties of $p$, $p\not\in \overline{\{f_\alpha(p): \alpha\in E\}}$.
\end{proof}

\begin{corollary}\label{easylemma}
{\ensuremath{\mathcal{H}(\omega^*)}}\ is crowded.
\end{corollary}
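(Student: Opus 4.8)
The plan is to obtain this as an immediate consequence of Proposition~\ref{cardinal}, which has already done all the real work. Recall that a space is \emph{crowded} when it has no isolated points, so it suffices to show that no $g \in \mathcal{H}(\omega^*)$ is isolated.

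First I would argue by contradiction. Suppose some $g \in \mathcal{H}(\omega^*)$ were isolated. Then $\{g\}$ is a nonempty open subset of $\mathcal{H}(\omega^*)$, and I would simply apply Proposition~\ref{cardinal} to this open set $U = \{g\}$. Taking, say, $\phi = d$, the proposition yields $d(\{g\}) = {\mathfrak c}$. But a one-point space has density $1$, and ${\mathfrak c} > 1$, a contradiction. Any of the listed cardinal functions $w, \chi, \pi\chi, c, d$ serves equally well here, since each of them equals $1$ (or in any case is finite) on a singleton, while the proposition forces the value ${\mathfrak c}$.

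Alternatively, and perhaps more transparently, one can invoke homogeneity: as a topological group, $\mathcal{H}(\omega^*)$ is a homogeneous space, and Proposition~\ref{cardinal} applied to $U = \mathcal{H}(\omega^*)$ gives $\chi(e, \mathcal{H}(\omega^*)) = {\mathfrak c} > 1$, so the identity $e$ admits no singleton local base and is therefore not isolated; homogeneity then transfers this to every point. Either route makes clear that there is no genuine obstacle at this stage. The substantive content lies entirely in the proof of Proposition~\ref{cardinal}, specifically in the lower bound $\chi(e,\mathcal{H}(\omega^*)) \ge {\mathfrak c}$ extracted from the independent family of clopen subgroups furnished by Lemma~\ref{independent}. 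The present corollary merely records that ${\mathfrak c} > 1$, which already forbids isolated points.
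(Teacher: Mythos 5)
Your proposal is correct and matches the paper's intent: the paper states this as an immediate consequence of Proposition~\ref{cardinal} (giving no separate argument), and your derivation --- a singleton open set would have to satisfy $d(U)=\chi(U)={\mathfrak c}$, which is absurd since these values on a singleton are at most $\omega<{\mathfrak c}$ --- is exactly the intended one-line deduction. The homogeneity variant you mention is equally fine; both are trivial once Proposition~\ref{cardinal} is in hand.
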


\subsection{Universality properties of {\ensuremath{\mathcal{H}(\omega^*)}}.}
In light of \ref{eerstecor}, it is a natural question whether $\mathcal{H}(\omega^*)$ is universal for all non-archimedean groups of small weight. It is not, as we will now show.

\begin{proposition}\label{ditisem}
If $X$ is a crowded compact zero-dimensional $F$-space in which nonempty $G_{\delta}$'s have nonempty interior, then all countable subsets of $\mathcal{H}(X)$ are discrete (and hence closed and discrete).
\end{proposition}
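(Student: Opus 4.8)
The plan is to use homogeneity to reduce to a single point, recast the problem as a selection of moved points with separated closures, and then invoke the $F$-space property to manufacture one separating clopen set.

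First I would reduce via homogeneity. Since $\mathcal{H}(X)$ is a topological group, a point $f$ is isolated in a set $S$ if and only if $e=\identity_X$ is isolated in $f^{-1}S$; so it suffices to show that whenever $g_n\in\mathcal{H}(X)\setminus\{e\}$ for $n<\omega$, the identity is isolated in $\{e\}\cup\{g_n:n<\omega\}$. The parenthetical ``closed and discrete'' then follows formally: if a countable $D$ had a limit point $p$, then $D\cup\{p\}$ would be a countable set in which $p$ is not isolated, contradicting discreteness of that set. Recalling from \S\ref{prelimeen} that the sets $\hat C=[C,C]\cap[X\setminus C,X\setminus C]=\{f:f(C)=C\}$, for $C$ clopen, are clopen neighborhoods of $e$, it is enough to construct a single clopen $C\subseteq X$ with $g_n(C)\neq C$ for every $n$; then $\hat C$ meets $\{e\}\cup\{g_n\}$ only in $e$.

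Second I would reduce the construction of $C$ to a point-selection problem. For each $n$, since $g_n\neq e$, the open set $M_n=\{x:g_n(x)\neq x\}$ is nonempty; pick $a_n\in M_n$ and put $b_n=g_n(a_n)\neq a_n$. Suppose the $a_n,b_n$ are all distinct and the set $D=\{a_n:n\}\cup\{b_n:n\}$ is relatively discrete. Then $A=\{a_n\}$ and $B=\{b_n\}$ are disjoint, and one may choose pairwise disjoint clopen $O_d$ ($d\in D$) with $O_d\cap D=\{d\}$ by a routine recursion (take clopen $O_d^0$ with $O_d^0\cap D=\{d\}$ and set $O_{d_k}=O_{d_k}^0\setminus\bigcup_{i<k}O_{d_i}$). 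Now $U=\bigcup_{a\in A}O_a$ and $V=\bigcup_{b\in B}O_b$ are disjoint open $F_\sigma$-sets. Because $X$ is a (normal) $F$-space, disjoint open $F_\sigma$-sets have disjoint closures, so $\overline U\cap\overline V=\emptyset$, and by zero-dimensionality these are separated by a clopen $C\supseteq\overline U$ with $C\cap\overline V=\emptyset$. Then $a_n\in C$ while $b_n=g_n(a_n)\notin C$, whence $g_n(C)\neq C$, as required.

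It remains to perform the selection so that $D$ is relatively discrete, and this is the heart of the matter. The natural recursion keeps the chosen points in pairwise disjoint clopen cells: at stage $n$ one seeks $a_n\in M_n$ with a clopen neighborhood disjoint from the finitely many cells already committed, and similarly for $b_n$. The only real obstacle is that $g_n$ may have small support, so that $M_n$ lies entirely inside the committed clopen region; then no fresh cell meeting $M_n$ is available and the greedy step stalls. Here the two standing hypotheses are exactly what is needed. Since $X$ is crowded, singletons have empty interior and so are not $G_\delta$-sets; since nonempty $G_\delta$-sets have nonempty interior, a decreasing sequence of clopen sets can never shrink to a point, its intersection being a $G_\delta$ that contains a nonempty clopen set. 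Hence the supports $M_n$ cannot concentrate at a single previously chosen point: whenever one is forced to place $a_n$ inside a committed cell, that cell may be subdivided, and any nested sequence of such subdivisions stabilizes above a fixed nonempty clopen set that perpetually supplies fresh room. Organizing the construction as a fusion along a tree of clopen cells — carving at each stage a new isolating cell for the pair $(a_n,b_n)$ out of the free part of the current cell, and using the $G_\delta$-interior property to guarantee that the free parts never vanish — produces pairwise disjoint clopen neighborhoods of all chosen points, hence the desired relatively discrete $D$. I expect the bookkeeping of this fusion — ensuring simultaneously that every $g_n$ is assigned a genuinely moved pair and that no chosen point becomes a limit of later ones — to be the main technical burden, with the $F$-space separation above serving as a clean endgame.
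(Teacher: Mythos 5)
Your reduction (by homogeneity it suffices to isolate $e$ in $\{e\}\cup\{g_n:n<\omega\}$) and your endgame are correct and coincide with the paper's: a single clopen $C$ with $a_n\in C$ and $g_n(a_n)\notin C$ for all $n$ yields the isolating neighborhood $\hat C$ of $e$, and such a $C$ is manufactured from countably many pairwise disjoint clopen tags via the $F$-space property plus compact zero-dimensionality. The genuine gap is in the step you yourself call the heart of the matter: the fusion that is supposed to make $D=\{a_n\}\cup\{b_n\}$ relatively discrete. Your justification --- that ``any nested sequence of such subdivisions stabilizes above a fixed nonempty clopen set that perpetually supplies fresh room'' --- does not protect the points you have already committed to. The $G_\delta$-interior hypothesis gives a nonempty clopen set inside the intersection $\bigcap_k P^{(k)}$ of a nested sequence of cells, but it does not give one that is a neighborhood of the committed point $a_j$: as you yourself observe, in such a space no point is a $G_\delta$, so $a_j$ may lie outside the interior of $\bigcap_k P^{(k)}$. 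If infinitely many later supports $M_m$ are trapped near $a_j$, forcing infinitely many subdivisions of its cell, then the points carved out of the successive rings $P^{(k)}\setminus P^{(k+1)}$ have all their accumulation points in $\bigcap_k P^{(k)}$, and nothing in your scheme prevents them from converging to $a_j$ itself. So relative discreteness of pre-chosen points --- which is not bookkeeping but the entire content of the proposition --- is left unproved, and the mechanism you cite cannot prove it.

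The paper's proof shows how to repair this: never ask the finitely-committed points to survive to the end. It runs the same kind of recursion, keeping for each $m$ a decreasing sequence of clopen cells $C^n_m\ni x_m$ such that $\{C^n_m : m\le n\}\cup\{f_m(C^n_m):m\le n\}$ is pairwise disjoint and $C^n_n$ lies inside a clopen $U$ with $U\cap f_n(U)=\emptyset$; cells may shrink at every stage, and no discreteness of the $x_n$ is ever claimed (they may well converge). Only after all $\omega$ stages is the $G_\delta$-interior hypothesis invoked, to extract nonempty clopen sets $E_n\subseteq\bigcap_{k\ge n}C^k_n$; these $E_n$, not neighborhoods of the $x_n$, are the pairwise disjoint tags, with $\bigcup_n E_n$ disjoint from $\bigcup_n f_n(E_n)$, and your $F$-space endgame then applies verbatim to produce $E$ and the neighborhood $\hat E$. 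In fact this also proves your intermediate selection claim (choose $a_n\in E_n$ at the very end; since $E_n\subseteq C^n_n\subseteq U$, the point $a_n$ is moved by $f_n$ and $f_n(a_n)\in f_n(E_n)$), but note that the only available proof of that claim is the paper's construction itself, so your outline defers the whole difficulty into the step it does not carry out.
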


\begin{proof}
Assume that $\{f_n : n < \omega\}\subseteq \mathcal{H}(X)\setminus \{e\}$. Pick a nonempty clopen subset $C_0^0$ in $X$ such that $C_0^0\cap f_0(C^0_0)=\emptyset$, and a point $x_0\in C^0_0$. By recursion on $1\le n < \omega$, we will construct $x_n\in X$, and for each $m \le n$ a clopen neighborhood $C^n_m$ of $x_m$, such that
\begin{enumerate}
\item $\{C^n_m : m\le n\}\cup \{f_m(C_m^n) : m \le n\}$ has cardinality $2(n{+}1)$ and is pairwise disjoint,
%\item $C_n^n\cap f_n(C_n^n)=\emptyset$,
\item for $m \le n{-}1$, $C_m^n \subseteq C_m^{n-1}$.
%\item $\bigcup_{k\le n} C_k^n \cap \bigcup_{k\le n} f_k(C_k^n) = \emptyset$.
\end{enumerate}
Assume that for certain $n < \omega$, we constructed $\{x_m : m\le n\}$ and the family $\{C_m^n: m\le n\}$. By (1), $F=\{x_m : m \le n\}\cup \{f_m(x_m): m \le n\}$ has cardinality $2(n{+}1)$. Put $G=F\cup f_{n+1}^{-1}(F)$, and let $U$ be a nonempty clopen subset of $X$ such that $U\cap f_{n+1}(U)=\emptyset$. Since $X$ is crowded, $U$ is infinite and $G$ is finite, we may pick $x_{n+1}\in U\setminus G$. Hence $H=\{x_m : m \le n{+1}\}\cup \{f_m(x_m): m \le n{+}1\}$ has cardinality $2(n{+}2)$. For each $m\le n{+}1$, let $S_m$ and $T_m$ be clopen neighborhoods of $x_m$ respectively $f_{m}(x_m)$ such that $T_m=f_m(S_m)$, the family $\{S_m : m\le n{+}1\}\cup \{T_m : m\le n{+}1\}$ has cardinality $2(n{+}2)$ and is pairwise disjoint. Now for each $m \le n$, put $C^{n+1}_m = C^n_m\cap S_m$, and let $C^{n+1}_{n+1} = S_{n+1}\cap U$. Then our choices are clearly as desired, which completes the recursion.

Now for each $n$, the intersection of the collection $\{C^k_n : k \ge n\}$ contains $x_n$. Hence, by assumption, we there is a nonempty clopen $E_n \subseteq\bigcap_{k\ge n} C^k_n$. Then $\bigcup_n E_n \cap \bigcup_n f_n(E_n)=\emptyset$ by (1), and hence, since $X$ is an $F$-space, there is a clopen subset $E$ of $X$ such that $\bigcup_n E_n\subseteq E\subseteq X\setminus \bigcup_n f_n(E_n)$. Now $U= \{g\in \mathcal{H}(X) : g(E)=E\}$ is an open neighborhood of $e$ in $\mathcal{H}(X)$ not containing any $f_n$.
\end{proof}

This means that the familiar topological group $2^\omega$, which is non-archimedean, is not topologically isomorphic to a subgroup of $\mathcal{H}(\omega^*)$.

The assumption on zero-dimensionality in \ref{ditisem}, is in fact superfluous: a slight adaptation of the proof gives the result for all compact crowded $F$-spaces in which nonempty $G_\delta$'s have nonempty interior.

We do not know whether \ref{ditisem} can be generalized for higher cardinals, even for $\omega^*$.
By \ref{cardinal}, {\ensuremath{\mathcal{H}(\omega^*)}}\ is crowded and has density ${\mathfrak c}$, hence there are subsets of ${\ensuremath{\mathcal{H}(\omega^*)}}$ of size ${\mathfrak c}$ that are not discrete. Hence our question is for cardinals between $\omega$ and ${\mathfrak c}$.
An inspection of the proof of \ref{ditisem} will reveal that for subsets say of cardinality $\omega_1$, one can run for example into a Hausdorff gap (a nightmare for $\omega^*$ af\i cionados), or perhaps ultrafilters with character $\omega_1$ (another nightmare).

\begin{question}
Is there in {\ensuremath{\mathsf{ZFC}}} a subset of {\ensuremath{\mathcal{H}(\omega^*)}} of size $\omega_1$ that is not closed?
\end{question}

It is also a natural question whether {\ensuremath{\mathcal{H}(\omega^*)}}\ contains all `small' $P$-groups in {\ensuremath{\mathsf{ZFC}}. It is not universal for `small' $P$-groups since it is not a $P$-group itself, see \ref{NotP}. The question cannot be answered in {\ensuremath{\mathsf{ZFC}}. As we showed, it does under {\ensuremath{\mathsf{CH}}}. But it does not in Shelah's model where {\ensuremath{\mathcal{H}(\omega^*)}}\ has cardinality ${\mathfrak c}$ (see \S{\ref{introduction}}), simply because the group $2^{\mathfrak c}_{(\delta)}$, which has weight ${\mathfrak c}$, is too large.

\par\bigskip\noindent

\noindent {\bf{Acknowledgements:}} I am indebted to Sergey Antonyan, Alan Dow, Klaas Pieter Hart, Michael Megrelishvili and Vladimir Uspenskiy for interesting discussions and useful comments.

%\bibliographystyle{\br{michael}}
%\bibliographystyle{alpha}
%\bibliography{\br{strings},\br{mill1},\br{publ},\br{eric}}

\def\cprime{$'$}
\makeatletter \renewcommand{\@biblabel}[1]{\hfill[#1]}\makeatother

\end{document}